\newtheorem{theorem}{Theorem}[section]
\newtheorem{proposition}[theorem]{Proposition}
\newtheorem{lemma}[theorem]{Lemma}
\theoremstyle{remark}
\newtheorem{remark}[theorem]{Remark}
\newtheorem{remarks}[theorem]{Remarks}
\theoremstyle{definition}
\DeclareMathOperator{\GL}{{\mathrm{GL}}}
\DeclareMathOperator{\SL}{{\mathrm{SL}}}
\DeclareMathOperator{\Hom}{{\mathrm{Hom}}}
\DeclareMathOperator{\End}{{\mathrm{End}}}
\DeclareMathOperator{\Ext}{{\mathrm{Ext}}}
\DeclareMathOperator{\Real}{{\mathrm{Re}}}
\DeclareMathOperator{\diag}{{\mathrm {diag}}} 	
\newcommand{\abs}[1]{|#1|}
\newcommand{\N}{{\mathbf N}}
\newcommand{\Z}{{\mathbf Z}}
\newcommand{\R}{{\mathbf R}}
\newcommand{\C}{{\mathbf C}}
\newcommand{\Ds}{{\mathcal {D}\{s\}}}
\newcommand{\Dss}{{\mathcal {D}[[s]]}}
\newcommand{\DS}{{\mathcal {DS}}}
\newcommand{\DR}{{\mathcal {DR}}}
\newcommand{\A}{{\mathcal A}}
\newcommand{\CG}{{{\mathbf C}G}}
\newcommand{\tH}{{\tilde H}}
\newcommand{\tJ}{{\tilde J}}
\newcommand{\tR}{{\tilde R}}
\newcommand{\tS}{{\tilde S}}
\newcommand{\tW}{{\tilde W}}
\newcommand{\U}{{\mathcal U}}
\newcommand{\Uhat}{{\widehat{\mathcal U}}}
\begin{document}
\title{The Divisor Matrix, Dirichlet Series and $\SL(2,\mathbf Z)$}
\author{Peter Sin}
\address{Department of Mathematics, University of Florida,
PO Box 118105,
Gainesville, FL 32611--8105, USA}
\email{sin@math.ufl.edu}
\author{John G. Thompson}
\address{Department of Mathematics, University of Florida,
PO Box 118105, Gainesville, FL 32611--8105, USA}
\email{jthompso@math.ufl.edu}
\subjclass[2000]{11M06, 20C12} 
\date{}
\begin{abstract}
A representation of $\SL(2,\Z)$ by integer matrices acting
on the space of analytic ordinary Dirichlet series
is constructed, in which the standard unipotent element acts as multiplication by the Riemann zeta function. It is then  shown that the Dirichlet series in the orbit of the zeta function are related to it by algebraic equations.
\end{abstract}
\maketitle
\section {Introduction}
This paper is concerned with group actions on the space 
of analytic Dirichlet series.
A \emph{formal} Dirichlet series is a series of the form
$\sum_{n=1}^\infty a_nn^{-s}$, where $\{a_n\}_{n=1}^\infty$
is a sequence of complex numbers  and $s$ is formal variable. 
Such series form an algebra $\Dss$ under the operations of termwise addition and scalar multiplication and multiplication defined by Dirichlet convolution:
\begin{equation}
(\sum_{n=1}^\infty a_nn^{-s})(\sum_{n=1}^\infty b_nn^{-s})=
\sum_{n=1}^\infty(\sum_{ij=n}a_ib_j)n^{-s}.
\end{equation}
 It is well known that this algebra, sometimes called
the {\it algebra of arithmetic functions}, is isomorphic with the algebra 
of formal power series in  a countably infinite set of variables.
The  \emph{analytic} Dirichlet series, those which converge for some 
complex value of the variable $s$, form a subalgebra $\Ds$, 
shown in \cite{BM} to be a local,
non-noetherian unique factorization domain. As a vector space we can
identify $\Dss$ with the space of sequences and $\Ds$ with the 
subspace  of sequences satifying a certain polynomial growth condition. 
It is important for our purposes that the space of sequences is
the dual $E^*$ of a space $E$ of countable dimension, since the 
linear operators on $\Ds$ of interest to us are induced from operators on $E$. 
We take $E$ to be the space of columns, indexed by positive integers, which have only finitely many nonzero entries. In the standard basis of $E$, endomorphisms acting on the left are represented by column-finite matrices 
with rows and columns indexed by the positive integers. They act on $E^*$
by right multiplication. The  endomorphisms of $E$ which preserve $\Ds$ 
in their right action form a subalgebra $\DR$. There is a natural
embedding of $\Ds$ into  $\DR$  mapping a series to the {\it
multiplication  operator} defined by convolution with the series.
The Riemann zeta function $\zeta(s)=\sum_{n=1}^\infty n^{-s}$, 
$\Real(s)>1$, is mapped to the \emph{divisor matrix} 
$D=(d_{i,j})_{i,j\in\N}$, defined by
\begin{equation*}
d_{i,j}=\begin{cases}1,\quad \text{if $i$ divides $j$,}\\
                    0\quad \text{otherwise.}\end{cases}
\end{equation*}

It is also of interest to study noncommutative subalgebras of
$\DR$ or nonabelian subgroups of $\DR^\times$ which contain $D$.
In \cite{Th}, it was shown that $\langle D\rangle$ could be embedded
as the cyclic subgroup of index $2$ in an infinite dihedral subgroup 
of $\DR^\times$.
Given a group $G$, the problem of finding a subgroup of  $\DR^\times$ 
isomorphic with $G$ and containing $D$
is equivalent to the problem of finding a matrix
representation of $G$ into $\DR^\times$ in which
some group element is represented by $D$.

As a reduction step for this general problem, it is
desirable to transform the divisor matrix into a Jordan canonical form.
Since $\DR$ is neither closed under matrix inversion nor similarity,
so such a reduction is useful only if the transition matrices
belong to $\DR^\times$. We show explicitly
(Lemma~\ref{Zproperties} and  Theorem~\ref{Zinverse} ) that $D$ 
can be transformed to a Jordan canonical form by matrices
in $\DR^\times$ with integer entries.

The remainder of the paper is devoted to the group $G=\SL(2,\Z)$.
We consider the problem of constructing a representation 
$\rho:G\to\DR^\times$ such that the standard unipotent 
element $T=\left(\begin{smallmatrix}
1&1\\0&1\end{smallmatrix}\right)$ is represented by $D$
and such that an element of order $3$ acts without fixed points.
The precise statement of the solution of this problem is 
Theorem~\ref{main} below.
Roughly speaking, the result on the Jordan canonical form reduces the
problem to one of constructing a representation of $G$ in which $T$
is represented by a standard infinite Jordan block. In the simplified
problem the polynomial growth condition on the matrices representing group 
elements becomes a certain exponential growth condition
and the fixed-point-free condition is unchanged. The construction is
the content of Theorem ~\ref{JTmodule}. 
Since the group $G$ has few relations it is relatively easy to define
matrix representations satisfying the growth and fixed-point-free conditions
and  with $T$ acting indecomposably. However, the growth condition is not in 
general preserved under similarity and it is a more delicate 
matter to find such a representation for which one can prove that the matrix representing $T$ can be put into Jordan form by transformations preserving the growth condition.

As we have indicated, the construction of the
representation $\rho$ involves
making careful choices and we do not know of an abstract
characterization of $\rho$ as a matrix representation. 
The $\C G$-module affording $\rho$
can, however, be characterized as the direct sum of
isomorphic indecomposable modules where the isomorphism  type
of the summands is uniquely determined up to $\C G$-isomorphism 
by the indecomposable action of $T$ and the existence of
a filtration by standard $2$-dimensional modules (Theorem~\ref{uniqueness}).
Although $\rho$ is just one among many 
matrix representations of $G$ into $\DR^\times$ which satisfy
our conditions, we proceed to examine the orbit 
of $\zeta(s)$  under $\rho(G)$. We find (Theorem~\ref{cubicthm}) 
that one series $\varphi(s)$ in this orbit is related to $\zeta(s)$ 
by the cubic equation
\begin{equation}
(\zeta(s)-1)\varphi(s)^2+\zeta(s)\varphi(s)-\zeta(s)(\zeta(s)-1)=0.
\end{equation}
We also show (Theorem~\ref{orbit}) that, as a consequence
of relations in the image of the group algebra, the other series in the orbit
belong to $\C(\zeta(s),\varphi(s))$.

The cubic equation may be rewritten as
\begin{equation}
-\varphi(s)=(\zeta(s)-1)(\varphi(s)^2+\varphi(s)-\zeta(s)).
\end{equation}
The second factor on the right is a unit in $\Ds$, so
$\varphi$ and $\zeta(s)-1$ are associate irreducible elements in the
factorial ring $\Ds$.
The fact that there is a cubic equation relating $\zeta(s)$
and an associate of $\zeta(s)-1$ should be contrasted with
the classical theorem of Ostrowski \cite{O}, which states that $\zeta(s)$
does not satisfy any algebraic differential-difference equation. 

Matrices resembling finite truncations of the divisor matrix were studied 
by Redheffer in \cite{Re}. For each natural number $n$ he considered
the matrix obtained from the upper left  $n\times n$ submatrix of
$D$ by setting each entry in  the first column equal to $1$. 
Research on Redheffer's matrices has been motivated  by the fact that their
determinants are the values of Mertens' function, which links them directly
to the Riemann Hypothesis. (See \cite{BJ}, \cite{V1} and \cite{V2}.)


\section{Basic definitions and notation}
Let $\N$ denote the natural numbers $\{1,2,\ldots\}$ 
and $\C$ the complex numbers.
Let $E$ be the free $\C$-module with basis $\{e_n\}_{n\in\N}$.
With respect to this basis the endomorphism ring $\End_\C(E)$, 
acting on the left of $E$, becomes identified with the ring $\A$
of matrices $A=(a_{i,j})_{i,j\in\N}$, with complex entries,
such that each column has only finitely many nonzero entries.
The dual space $E^*$ becomes identified with the space $\C^\N$
of sequences of complex numbers, with $f\in E^*$
corresponding to the sequence $(f(e_n))_{n\in\N}$.
We will write  $(f(e_n))_{n\in\N}$ as $f$ and  $f(e_n)$ as $f(n)$  for short.

In this notation the natural right action of $\End_{\C}(E)$
on $E^*$ is expressed as a right action of $\A$ on $\C^\N$ by
\begin{equation*}
(fA)(n)=\sum_{m\in\N}a_{m,n}f(m), \qquad f\in\C^\N, A\in\A.
\end{equation*}
(The sum has only finitely many nonzero terms.)

Let $\DS$ be the subspace of $\C^\N$ consisting of the sequences $f$ 
for which there exist positive constants $C$ and $c$ such that
for all $n$, $\abs{f(n)}\leq Cn^c$. 
A sequence $f$ lies in $\DS$ if and only if the Dirichlet series 
$\sum_nf(n)n^{-s}$ converges for some complex number $s$,
which gives a  canonical bijection between $\DS$ and the space $\Ds$ 
of analytic Dirichlet series.
Let  $\DR$ be the subalgebra of $\A$ consisting of all elements 
which leave $\DS$ invariant.

A sufficient condition for membership in
$\DR$ is provided by the following lemma, whose proof is straightforward.
\begin{lemma}\label{ivanish} Let $A=(a_{i,j})_{i,j\in\N}\in \A$. Suppose
that there exist positive constants $C$ and $c$ such that
the following hold.
\begin{enumerate}
\item[(i)] $a_{i,j}=0$ whenever $i>Cj^c$.
\item[(ii)] For all $i$ and $j$ we have $\abs{a_{i,j}}\leq Cj^c$.
\end{enumerate}
Then $A\in\DR$. Furthermore, the set of all elements of $\A$
which satisfy these conditions, where the constants may
depend on the matrix, is a subring of $\DR$. 
\end{lemma}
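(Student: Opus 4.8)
The statement has two parts: first, that conditions (i) and (ii) force $A$ to preserve $\DS$; second, that the collection of all such matrices (allowing the constants to vary) is a subring of $\DR$. For the first part, I would take $f\in\DS$, so $\abs{f(m)}\le C'm^{c'}$ for some positive constants $C',c'$, and estimate $\abs{(fA)(n)}=\abs{\sum_{m\in\N}a_{m,n}f(m)}$. By hypothesis (i) the sum ranges only over $m\le Cn^c$, so there are at most $Cn^c$ nonzero terms; by (ii) each satisfies $\abs{a_{m,n}}\le Cn^c$, and $\abs{f(m)}\le C'(Cn^c)^{c'}$ since $m\le Cn^c$. Multiplying these bounds gives $\abs{(fA)(n)}\le (Cn^c)\cdot (Cn^c)\cdot C'(Cn^c)^{c'}$, which is of the form $C''n^{c''}$ for suitable positive constants. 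Hence $fA\in\DS$, so $A$ leaves $\DS$ invariant and $A\in\DR$. (One also notes $A\in\A$ is given, so $fA$ is well-defined.)

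For the subring claim, let $\mathcal{S}$ denote the set of $A\in\A$ for which such constants exist. Closure under scalar multiplication and addition is immediate: if $A$ satisfies (i),(ii) with constants $C_1,c_1$ and $B$ with constants $C_2,c_2$, then $A+B$ vanishes where both do, in particular where $i>\max(C_1,C_2)j^{\max(c_1,c_2)}$, and its entries are bounded by the sum of the two bounds, hence by $(C_1+C_2)j^{\max(c_1,c_2)}$. The identity matrix lies in $\mathcal{S}$ trivially. The substantive point is closure under multiplication: I would show $AB\in\mathcal{S}$ where $(AB)_{i,k}=\sum_j a_{i,j}b_{j,k}$. For a fixed $k$, the term $a_{i,j}b_{j,k}$ is nonzero only if $j\le C_2k^{c_2}$ (from (i) for $B$) and $i\le C_1 j^{c_1}\le C_1(C_2k^{c_2})^{c_1}$ (from (i) for $A$), which is again a bound of the form $Ck^c$; so $(AB)_{i,k}=0$ for $i$ large in the required sense, giving (i) for $AB$. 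For (ii), the number of nonzero $j$ is at most $C_2k^{c_2}$, and each summand is bounded by $\abs{a_{i,j}}\abs{b_{j,k}}\le C_1 j^{c_1}\cdot C_2 k^{c_2}\le C_1(C_2k^{c_2})^{c_1}\cdot C_2k^{c_2}$, so $\abs{(AB)_{i,k}}$ is bounded by a product of powers of $k$, i.e.\ by some $Ck^c$. Thus $AB\in\mathcal{S}$, and since $\mathcal{S}\subseteq\DR$ by the first part, $\mathcal{S}$ is a subring of $\DR$.

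I do not anticipate a genuine obstacle here — the lemma is deliberately elementary, and the proof is just bookkeeping with polynomial bounds, being careful that all constants are positive (so that inequalities like $m\le Cn^c \implies m^{c'}\le (Cn^c)^{c'}$ are valid) and that column-finiteness of $A$ is already guaranteed by membership in $\A$, so that the sums defining $fA$ and $AB$ make sense. The only mild subtlety worth a sentence is the interplay of the two uses of condition (i) in the multiplication step: one must first bound the summation index $j$ in terms of $k$ using (i) for $B$, and only then bound $i$ in terms of $j$ (hence in terms of $k$) using (i) for $A$; getting that order right is what makes the composite vanishing condition come out polynomial rather than something worse.
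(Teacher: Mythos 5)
Your proof is correct; the paper itself omits the argument, calling it straightforward, and your polynomial bookkeeping (bounding the number of nonzero terms via (i) and each term via (ii), then composing the two vanishing conditions in the right order for products) is exactly the intended verification.
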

We let $\DR_0$ denote the subring of $\DR$ defined by the lemma.
If $\sum_{n\in N}f(n)n^{-s}\in\Ds$, then its multiplication
operator has matrix with $(i,ni)$ entries equal to $f(n)$ for
all $i$ and $n$ and all other entries zero, so the
multiplication operators form commutative subalgebra of $\DR_0$.

\begin{remarks}\label{DRexample}
There exist elements of $\DR$ 
which do not satisfy the hypotheses of Lemma~\ref{ivanish}.
An example is the matrix 
$(a_{i,j})_{i,j\in\N}$ defined by
\begin{equation*}
a_{i,j}=\begin{cases}\frac{1}{j^{j^2}},\quad\text{if $i=j^j$,}\\
                     0,\quad\text{otherwise.}\\
\end{cases}
\end{equation*}

There are invertible matrices in $\DR_0$  whose inverses 
are not in $\DR$.
For example, the matrix $(b_{i,j})_{i,j\in\N}$, given by 
\begin{equation*}
b_{i,j}=\begin{cases}0,\quad\text{if $i>j$,}\\
                      1,\quad\text{if $i=j$,}\\
                      -1 \quad\text{if $i< j$}
\end{cases}
\end{equation*}
is obviously in $\DR_0$, while its inverse, given by
\begin{equation*}
b'_{i,j}=\begin{cases}0,\quad\text{if $i>j$,}\\
                      1,\quad\text{if $i=j$,}\\
                     2^{j-i-1} \quad\text{if $i< j$}
\end{cases}
\end{equation*}
is not in $\DR$.

\end{remarks}

\section{An action of $\SL(2,\Z)$ on Dirichlet series}
The group $G=\SL(2,\mathbf Z)$ is generated by the matrices
\begin{equation}\label{gens}
S=\begin{bmatrix}0&-1\\
          1&0\end{bmatrix},
\quad\text{and}\quad
T=\begin{bmatrix}1&1\\
          0 &1\end{bmatrix}.
\end{equation}
These matrices satisfy the relations
\begin{equation}\label{rels}
S^4=(ST)^6=1,\quad S^2=(ST)^3
\end{equation}
which, as is well known, form a set of defining relations
for $\SL(2,\Z)$ as an abstract group.

With the above definitions, we can state one of our principal 
results.

\begin{theorem}\label{main} There exists a representation 
$\rho:\SL(2,\Z)\rightarrow \A^\times$ with the following properties.
\begin{enumerate}
\item[(a)] The underlying $\C\SL(2,\Z)$-module $E$ has an ascending filtration
$$
0=E_0\subset E_1\subset E_2\subset\cdots
$$
of $\C\SL(2,\Z)$-submodules such that for each $i\in\N$,
the quotient module $E_{i}/E_{i-1}$ 
is isomorphic to the standard $2$-dimensional $\C\SL(2,\Z)$-module.
\item[(b)] $\rho(T)=D$.
\item[(c)] $\rho(Y)$ is an integer matrix for every $Y\in\SL(2,\Z)$.
\item[(d)] $\rho(\SL(2,\Z))\subseteq\DR_0$.
\end{enumerate}
\end{theorem}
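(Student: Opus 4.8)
The plan is to construct $\rho$ by building the $\C G$-module $E$ as an infinite iterated extension of copies of the standard $2$-dimensional module $V=\C^2$, and then choosing a basis cleverly so that $T$ acts as $D$. The first step is to understand the structure of $\End$ and $\Ext^1$ for the standard module over $\Z G$ (or $\C G$): since $G=\SL(2,\Z)$ has the presentation $\langle S,T\mid S^4=(ST)^6=1,\ S^2=(ST)^3\rangle$, a representation into $2\times 2$ blocks stacked up is determined by a pair of block-upper-triangular matrices satisfying these relations. So I would first write $\rho(T)$ and $\rho(S)$ as block matrices $\rho(T)=(T_{ij})$, $\rho(S)=(S_{ij})$ with $T_{ij},S_{ij}\in M_2(\Z)$, $T_{ij}=S_{ij}=0$ for $i>j$, and with each diagonal block equal to the standard generators $\left(\begin{smallmatrix}1&1\\0&1\end{smallmatrix}\right)$ and $\left(\begin{smallmatrix}0&-1\\1&0\end{smallmatrix}\right)$ respectively (this guarantees property (a) automatically, once the relations hold). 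The relations \eqref{rels} then become an infinite system of matrix equations for the off-diagonal blocks, upper-triangular in structure, which one solves recursively: the diagonal already satisfies the relations, and at each stage the next anti-diagonal of blocks is constrained by a linear-plus-lower-order equation that one shows is solvable over $\Z$.

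**Next**, the key point is that property (b) forces the block structure. The divisor matrix $D$, when we group the basis $\{e_n\}$ into consecutive pairs $\{e_{2k-1},e_{2k}\}$, does NOT obviously look block-upper-triangular with the right diagonal — so the actual construction must use a more natural indexing adapted to $D$. Here I expect the paper uses the structure of $D$ under the divisibility partial order: the orbit/level structure of $\N$ under multiplication. A cleaner route, matching the introduction's remark that "the result on the Jordan canonical form reduces the problem to one of constructing a representation in which $T$ is represented by a standard infinite Jordan block," is to first solve the analogous problem with $D$ replaced by a single infinite Jordan block $J$ (this is deferred to Theorem~\ref{JTmodule}), and then conjugate by the transition matrix of Lemma~\ref{Zproperties}/Theorem~\ref{Zinverse} which carries $D$ to Jordan form and lies in $\DR_0^\times$ with integer entries. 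Since that transition matrix and its inverse are integral and in $\DR_0$, conjugating preserves properties (c) and (d), and preserves (a) since it is an isomorphism of $\C G$-modules. So the proof here is essentially: invoke Theorem~\ref{JTmodule} for the Jordan-block version, then transport along the integral $\DR_0$-conjugation from the earlier lemmas.

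**The main obstacle**, and the part requiring the real work (which is precisely why it is isolated as Theorem~\ref{JTmodule}), is verifying the growth condition, i.e. property (d), in the Jordan-block model. Defining integer matrices satisfying \eqref{rels} with $T\mapsto J$ and the filtration is comparatively soft — the relations in $G$ are few and the recursion over anti-diagonals of blocks can be carried out with integer entries. But controlling the \emph{size} of those entries so that $\rho(Y)\in\DR_0$ for every $Y\in G$ — equivalently, so that the hypotheses (i),(ii) of Lemma~\ref{ivanish} hold with constants depending only on $Y$ — is delicate, because the recursive solutions for the off-diagonal blocks can a priori grow super-polynomially in the anti-diagonal index. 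The heart of Theorem~\ref{JTmodule} must therefore be an explicit closed form, or a sharp recursive bound, for the blocks $T_{ij}, S_{ij}$ showing they have entries polynomially bounded in $j$ and vanishing when $i$ exceeds a polynomial in $j$; then (d) for $S$ and $T$ extends to all of $\rho(G)$ because $\DR_0$ is a ring (Lemma~\ref{ivanish}). For the present theorem, granting Theorem~\ref{JTmodule}, what remains is routine: assemble (a)–(d) from the Jordan-block statement plus the integrality and $\DR_0$-membership of the conjugating matrices supplied by Lemma~\ref{Zproperties} and Theorem~\ref{Zinverse}.
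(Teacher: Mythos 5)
Your high-level strategy is the right one and matches the paper: reduce to the Jordan-form picture, invoke Theorem~\ref{JTmodule} for a representation $\tau$ with $\tau(T)=J_\infty$, and then transport back to $D$ using $Z$ and $Z^{-1}$, which are integral and lie in $\DR_0$ by Lemma~\ref{Zproperties} and Theorem~\ref{Zinverse}. You have also correctly identified that the heart of the matter is the growth bound on $\tau(S)$, which is why it is isolated in Theorem~\ref{JTmodule}.

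There is, however, a genuine gap in the assembly step. You write as if conjugating by $Z$ takes $D$ to the single infinite Jordan block $J_\infty$, so that $Y\mapsto Z^{-1}\tau(Y)Z$ finishes the job. But the paper's Lemma~\ref{Zproperties}(e) gives $ZDZ^{-1}=J$, where $J$ is the matrix with $J_{i,j}=1$ for $j\in\{i,2i\}$; this is \emph{not} a single Jordan block but a direct sum of countably many copies of $J_\infty$, one for each orbit $\{d2^{k-1}: k\in\N\}$ with $d$ odd. Theorem~\ref{JTmodule} only hands you a representation on a single $J_\infty$-block; to produce a representation of $G$ with $T\mapsto J$, the paper introduces the ``diagonal embedding'' $\psi:\A\to\prod_{d\,\mathrm{odd}}\A(d)\subseteq\A$ induced by the bijections $e_k\mapsto e'_{d2^{k-1}}$, so that $\psi(J_\infty)=J$, and then sets $\rho(Y)=Z^{-1}\psi(\tau(Y))Z$. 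This step is not cosmetic: it is also where the growth estimate is converted into the form needed for $\DR_0$. From Theorem~\ref{JTmodule}(d) one has $\abs{\tau(S)_{k,\ell}}\leq 2^{C\ell}$ (exponential in the Jordan index $\ell$), and it is precisely the reindexing $j=d2^{\ell-1}\geq 2^{\ell-1}$ in the definition of $\psi$ that turns this into the polynomial bound $\abs{\psi(\tau(S))_{i,j}}\leq 2^Cj^C$ required by Lemma~\ref{ivanish}; similarly, $\tau(S)_{k,\ell}=0$ for $k>\ell+1$ becomes $\psi(\tau(S))_{i,j}=0$ for $i>2j$. Without the $\psi$ step your proposed conjugation does not produce $T\mapsto D$, and the exponential-to-polynomial conversion of the growth estimate has no mechanism. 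To complete the argument you would need to make this multiplicity/reindexing step explicit.
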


The facts needed for the proof of Theorem~\ref{main} are
established in the following sections and the proof is completed in
Section~\ref{proofmain}.

\section{A Jordan form of the divisor matrix}\label{JCF}

For $m$, $k\in\N$, let
\begin{equation*}
A_k(m)=\lbrace (m_1,m_2,\ldots,m_k)\in(\N\setminus\{1\})^k\mid\, m_1m_2\cdots m_k=m
\rbrace
\end{equation*}
and let $\alpha_k(m)=\abs{A_k(m)}$. 

The following properties of these numbers follow from the definitions.
\begin{lemma}\label{alphaproperties}
\begin{enumerate}
\item[(a)] $\alpha_k(1)=0$.
\item[(b)] $\alpha_k(m)=0$ if $m<2^k$ and $\alpha_k(2^k)=1$.
\item[(c)] 
\begin{equation*}
(\zeta(s)-1)^k=\sum_{m=2^k}^\infty\frac{\alpha_k(m)}{m^s}.
\end{equation*}
\end{enumerate}
\end{lemma}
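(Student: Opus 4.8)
The plan is to read off all three assertions directly from the combinatorial description of $\alpha_k(m)$ as the number of ordered factorizations $m = m_1 m_2 \cdots m_k$ with every $m_i \geq 2$. First I would treat (a) and the vanishing half of (b) at once: any element of $A_k(m)$ is a tuple of $k$ integers each at least $2$, so its product $m$ satisfies $m \geq 2^k$. Hence $A_k(m)$ is empty, and $\alpha_k(m) = 0$, whenever $m < 2^k$; in particular $\alpha_k(1) = 0$, since $k \geq 1$ forces $2^k \geq 2 > 1$.

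Next, for the equality $\alpha_k(2^k) = 1$ I would argue that if $(m_1, \ldots, m_k) \in A_k(2^k)$, then each $m_i$ divides $2^k$ and exceeds $1$, so $m_i = 2^{a_i}$ with $a_i \geq 1$; the relation $m_1 \cdots m_k = 2^k$ becomes $a_1 + \cdots + a_k = k$, and a sum of $k$ integers each at least $1$ equals $k$ only when every $a_i = 1$. Thus $(2, 2, \ldots, 2)$ is the unique element of $A_k(2^k)$. This little additive bookkeeping is the only step in the lemma that is not entirely immediate, so I would regard it as the "main obstacle," though it is routine.

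Finally, for (c) I would start from $\zeta(s) - 1 = \sum_{n \geq 2} n^{-s}$, which lies in $\Ds$; since $\Ds$ is closed under Dirichlet convolution, $(\zeta(s)-1)^k$ is again an analytic Dirichlet series. Expanding the $k$-fold convolution product and collecting all terms whose index product $m_1 \cdots m_k$ equals a fixed $m$ shows that the coefficient of $m^{-s}$ is precisely $\alpha_k(m)$; alternatively the coefficient identity follows by an easy induction on $k$ from the convolution formula. The vanishing established above then lets one write the series as $\sum_{m = 2^k}^\infty \alpha_k(m) m^{-s}$, which is the assertion. Apart from the uniqueness argument in (b), every step here is a direct unwinding of definitions, so I do not anticipate any genuine difficulty; the lemma mainly serves to record identities used later.
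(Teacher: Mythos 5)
Your proof is correct, and it simply spells out what the paper treats as immediate from the definitions (the paper offers no proof, stating only that the properties "follow from the definitions"). Your argument — each entry of a tuple in $A_k(m)$ is at least $2$ so $m\geq 2^k$, a sum $a_1+\cdots+a_k=k$ with $a_i\geq 1$ forces all $a_i=1$, and expanding the $k$-fold Dirichlet convolution of $\zeta(s)-1$ gives $\alpha_k(m)$ as the coefficient of $m^{-s}$ — is exactly the intended unwinding.
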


By considering the first $k-1$ entries of elements of
$A_k(m)$, we see that for  $k>1$, we have
\begin{equation}\label{count} 
\alpha_k(m)=\left(\sum_{d\mid m}\alpha_{k-1}(d)\right) - \alpha_{k-1}(m).
\end{equation}
Induction yields the following formula.
\begin{lemma}\label{alphacount}
\begin{equation}\sum_{i=1}^{k-1}(-1)^{k-1-i}
\sum_{d\mid m}\alpha_i(d)=\alpha_k(m)+(-1)^{k}\alpha_1(m).
\end{equation}
\end{lemma}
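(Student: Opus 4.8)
The plan is a routine induction on $k$, with the recursion (\ref{count}) as essentially the only input. To keep the alternating signs manageable I would first abbreviate $\sigma_i(m):=\sum_{d\mid m}\alpha_i(d)$, so that (\ref{count}) reads $\alpha_k(m)=\sigma_{k-1}(m)-\alpha_{k-1}(m)$ for $k>1$, and the assertion to be proved becomes
\[
\sum_{i=1}^{k-1}(-1)^{k-1-i}\sigma_i(m)=\alpha_k(m)+(-1)^k\alpha_1(m).
\]

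For the base case I would take $k=1$: the left-hand sum is empty, hence $0$, while the right-hand side is $\alpha_1(m)+(-1)^1\alpha_1(m)=0$ as well. (Equivalently one can start at $k=2$, where the identity reduces immediately to (\ref{count}) with $k=2$.)

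For the inductive step, assume the identity holds for some $k\ge 1$ and consider the case $k+1$. The sum for $k+1$ is $\sum_{i=1}^{k}(-1)^{k-i}\sigma_i(m)$; splitting off the top term $i=k$ gives
\[
\sum_{i=1}^{k}(-1)^{k-i}\sigma_i(m)=\sigma_k(m)-\sum_{i=1}^{k-1}(-1)^{k-1-i}\sigma_i(m).
\]
By the inductive hypothesis the remaining sum equals $\alpha_k(m)+(-1)^k\alpha_1(m)$, so the right-hand side becomes $\sigma_k(m)-\alpha_k(m)-(-1)^k\alpha_1(m)$. Applying (\ref{count}) with $k+1$ in place of $k$ yields $\sigma_k(m)-\alpha_k(m)=\alpha_{k+1}(m)$, and the whole expression collapses to $\alpha_{k+1}(m)+(-1)^{k+1}\alpha_1(m)$, which is precisely the desired identity for $k+1$.

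There is no genuine obstacle here; the manipulation is entirely formal, and the computation is driven by the single observation that the alternating partial sums telescope against the recursion (\ref{count}). The only point to watch is that (\ref{count}) is valid only for $k>1$, so it must always be invoked with argument at least $2$ — which is automatic in the argument above, since it is used with argument $k+1\ge 2$.
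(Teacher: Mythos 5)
Your proof is correct and is precisely the induction from equation (\ref{count}) that the paper invokes (the paper simply states ``Induction yields the following formula'' without writing out the details). The base case, the splitting off of the top term, and the telescoping against (\ref{count}) are exactly the intended argument.
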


\begin{lemma}\label{bound} There exists a constant $c$ such that $\alpha_k(m)\leq m^c$
for all $k$ and $m$.
\end{lemma}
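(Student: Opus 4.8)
The plan is to bound $\alpha_k(m)$, the number of ordered factorizations of $m$ into $k$ factors each at least $2$, by summing over the unordered factorizations and controlling both the number of parts and the multinomial coefficient that counts orderings. First I would observe that $\alpha_k(m)=0$ unless $2\le k\le\log_2 m$, by Lemma~\ref{alphaproperties}(b), so only about $\log_2 m$ values of $k$ contribute for a given $m$; hence it suffices to bound the \emph{total} number of ordered factorizations $\sum_k\alpha_k(m)$ into parts $\ge 2$, and in fact even a bound on each $\alpha_k(m)$ that is uniform in $k$ will follow once we bound this sum.

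Next I would reduce to counting via the multiplicative structure. For a fixed unordered multiset of parts $\{p_1,\dots,p_k\}$ with $\prod p_i=m$, the number of distinct ordered tuples is at most $k!\le (\log_2 m)!$, which is certainly $\le m^{\epsilon}$ for $m$ large (since $(\log_2 m)!=2^{O(\log m\log\log m)}=m^{o(1)}$); so the real content is to bound the number of unordered factorizations of $m$ into parts $\ge 2$, a quantity traditionally denoted $m^{**}$ or the ``multiplicative partition function.'' This is classically known to be $m^{o(1)}$ — more concretely $\le 2^{O(\log m/\log\log m)}$ — but rather than invoke that, I would give a self-contained crude estimate: the number of unordered factorizations of $m$ is at most the number of ordered factorizations into \emph{prime powers} following the prime factorization $m=\prod q_i^{a_i}$, which is $\prod (\text{number of partitions of }a_i)\le\prod p(a_i)$, and since $\sum a_i\le\log_2 m$ this product is at most $2^{O(\sqrt{\log m})}$ using the standard bound $p(a)\le e^{C\sqrt a}$. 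Combining, $\alpha_k(m)\le (\log_2 m)!\cdot 2^{O(\sqrt{\log m})}$, which is $\le m^c$ for any fixed $c>0$ once $m$ is sufficiently large, and then enlarging $c$ to absorb the finitely many remaining $m$ gives the uniform bound.

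Alternatively — and this may be the cleaner route to write down — I would use the recursion \eqref{count} together with an explicit Dirichlet-series comparison: from Lemma~\ref{alphaproperties}(c), $\sum_m\alpha_k(m)m^{-s}=(\zeta(s)-1)^k$, and evaluating at, say, $s=2$ gives $\sum_m\alpha_k(m)m^{-2}=(\zeta(2)-1)^k\le (\zeta(2)-1)<1$ for all $k\ge 1$ since $\zeta(2)-1<1$. In particular $\alpha_k(m)m^{-2}\le 1$ for every single term, so $\alpha_k(m)\le m^2$ for all $k$ and $m$ at once. This is the bound we want, with $c=2$.

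The main obstacle is essentially a matter of choosing the slickest argument rather than a genuine difficulty: the naive ordered-factorization count is already $m^{o(1)}$, so any reasonable approach works, but one must be careful that the bound is \emph{uniform in $k$}, which the combinatorial route only achieves after noting $k\le\log_2 m$, whereas the Dirichlet-series route at $s=2$ delivers uniformity in $k$ for free because $\zeta(2)-1<1$ makes $(\zeta(2)-1)^k$ bounded (indeed decreasing) in $k$. I expect to present the $s=2$ argument as the proof, since it is three lines and uses only Lemma~\ref{alphaproperties}(c), which is already available.
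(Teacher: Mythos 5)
Your proposal is correct, and the argument you say you would actually write down (the evaluation at $s=2$) is a genuinely different, and shorter, route than the paper's. The paper proceeds by induction on $k$ using the recursion (\ref{count}): it chooses $c$ with $\zeta(c)=2$, so that $\sum_{d\geq 2}d^{-c}=\zeta(c)-1=1$ and the bound $\alpha_k(m)\leq m^c$ propagates from $k$ to $k+1$ with no loss. Your version instead compares a single nonnegative term of the Dirichlet series against its sum: $\alpha_k(m)m^{-2}\leq\sum_n\alpha_n(n)n^{-2}=(\zeta(2)-1)^k<1$, giving $\alpha_k(m)\leq m^2$ uniformly in $k$ in one step, with no induction. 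Both arguments turn on the same quantity $\zeta(s)-1$; the paper needs it to equal $1$ to close the induction, while you only need it to be at most $1$ at $s=2$. What you give up is the constant: the paper's choice yields $c\in(1,2)$ (as its following remark points out, since $\zeta(2)=\pi^2/6<2$), whereas you get $c=2$; for the lemma as stated this is immaterial. The only point worth making explicit in your write-up is that Lemma~\ref{alphaproperties}(c) is a priori a formal identity, so you should note that the $k$-fold Dirichlet product $(\zeta(s)-1)^k$ converges absolutely at $s=2$ (standard for products of absolutely convergent Dirichlet series, or by nonnegativity of the $\alpha_k(m)$), which licenses the term-versus-sum comparison. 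Your first, combinatorial route via multiplicative partitions and the bound $k\leq\log_2 m$ would also work and in fact gives the much stronger $\alpha_k(m)=m^{o(1)}$, but you are right that it is more than the lemma needs.
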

\begin{proof} We choose $c$ with $\zeta(c)=2$. We proceed
by induction on $k$. Since $\alpha_1(m)\leq 1$, the result is true
when $k=1$.
Suppose for some $k$ we have that for all $m$,
\begin{equation*}
\alpha_k(m)\leq m^c.
\end{equation*}
Then by (\ref{count}) we have
\begin{equation*}
\alpha_{k+1}(m)\leq\sum_{\begin{smallmatrix} d\mid m\\
                                             1<d<m\end{smallmatrix}}
\alpha_k(m/d)\leq m^c
\sum_{\begin{smallmatrix} d\mid m\\
             1<d<m\end{smallmatrix}}d^{-c}\leq m^c(\zeta(c)-1)=m^c,
\end{equation*}
which completes the inductive proof.
\end{proof}
\begin{remark}Since $\zeta(2)=\pi^2/6$,
the constant $c$ can be chosen from the real interval $(1,2)$.
\end{remark}

Let $J=(J_{i,j})_{i,j\in\N}$ be the matrix defined by
\begin{equation*}
J_{i,j}=\begin{cases}1,\quad \text{if $j\in\{i,2i\}$},\\
                    0\quad \text{otherwise}.\end{cases}
\end{equation*}

Let $Z=(\alpha(i,j))_{i,j\in\N}$ be the matrix described in the following way.
The odd rows have a single nonzero entry, equal to $1$ on
the diagonal. Let $i=2^kd$ with $d$ odd. Then the $i^\mathrm{th}$ row of $Z$ is 
equal to the $d^\mathrm{th}$ row of $(D-I)^{k}$.

\begin{lemma}\label{Zproperties} The matrix $Z$ has the following properties:
\begin{enumerate}
\item[(a)] $\alpha(i,j)=\delta_{i,j}$, if $i$ is odd.
\item[(b)] If $i=d2^k$, where $d$ is odd and $k\geq 1$, then
$$
\alpha(i,j)=\begin{cases}\alpha_k(j/d)\quad\text{if $d\mid j$,}\\
                             0 \quad\text{otherwise.}
             \end{cases}
$$
\item[(c)] $\alpha(im,jm)=\alpha(i,j)$ whenever $m$ is odd.
\item[(d)] $Z$ is upper unitriangular.
\item[(e)] $ZDZ^{-1}=J$.
\item[(f)] $Z\in\DR_0$.
\end{enumerate}
Moreover, $Z$ is the unique matrix satisfying (a) and (e).
\end{lemma}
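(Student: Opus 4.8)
The plan is to verify properties (a)--(f) more or less in the order stated, since several of them follow immediately from the definition of $Z$, and then to prove the uniqueness assertion, which is where the real content lies. Property (a) is literally the definition of the odd rows. For (b), write $i = d2^k$ with $d$ odd and $k\geq 1$; by definition the $i^{\mathrm{th}}$ row of $Z$ is the $d^{\mathrm{th}}$ row of $(D-I)^k$. Now $D-I$ has $(a,b)$ entry equal to $1$ when $a$ properly divides $b$ and $0$ otherwise, so the $d^{\mathrm{th}}$ row of $(D-I)^k$ has $(d,j)$ entry equal to the number of chains $d = c_0 \mid c_1 \mid \cdots \mid c_k = j$ with each step proper; writing $c_\ell = d\,e_\ell$ this is exactly $\alpha_k(j/d)$ when $d\mid j$ and $0$ otherwise, using the defining bijection with $A_k(j/d)$. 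This also gives (c): if $m$ is odd then $im$ and $jm$ have the same $2$-part, and the formula in (b) depends only on $j/d$, which is unchanged when both $i$ and $j$ are scaled by the odd number $m$ (the odd part becomes $dm$, the ratio $j/d$ is unaffected). Property (d) follows from (a), (b) and Lemma~\ref{alphaproperties}(b): the diagonal entries are $1$, and $\alpha_k(j/d)=0$ unless $j/d \geq 2^k$, i.e.\ $j \geq d2^k = i$, so $Z$ is upper unitriangular and in particular invertible over $\Z$. For (f), combine (b) with Lemma~\ref{bound}: the nonzero entries of row $i=d2^k$ are bounded by $(j/d)^c \leq j^c$, and they vanish unless $i \leq j$, so conditions (i) and (ii) of Lemma~\ref{ivanish} hold and $Z \in \DR_0$.

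The identity (e), $ZDZ^{-1}=J$, equivalently $ZD = JZ$, is the crux of the computational part. I would prove $ZD=JZ$ by comparing rows. Fix $i$ and consider the $i^{\mathrm{th}}$ row of each side. The $i^{\mathrm{th}}$ row of $JZ$ is, by the definition of $J$, the sum of the $i^{\mathrm{th}}$ and $(2i)^{\mathrm{th}}$ rows of $Z$. The $i^{\mathrm{th}}$ row of $ZD$ is the $i^{\mathrm{th}}$ row of $Z$ multiplied by $D$ on the right, i.e.\ the $j^{\mathrm{th}}$ entry is $\sum_{t\mid j}\alpha(i,t)$. Writing $i = d2^k$ with $d$ odd, both sides are supported on multiples of $d$, and after dividing through by $d$ the claim reduces to the single identity
\begin{equation*}
\sum_{t\mid n}\alpha_k(t) = \alpha_k(n) + \alpha_{k+1}(n)
\end{equation*}
for all $n\geq 1$ and $k\geq 0$ (interpreting $\alpha_0(n)=\delta_{n,1}$, so that the $k=0$ case reads $1 = \delta_{n,1} + \alpha_1(n)$, which holds). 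But this is exactly the recursion~(\ref{count}) rearranged, valid for $k\geq 1$, and checked directly for $k=0$. One must also treat the odd rows ($k=0$) and the rows $i=2i'$ with $i'$ odd correctly, but these are the boundary cases of the same identity. This establishes (e).

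Finally, the uniqueness statement: if $Z'$ also satisfies (a) and (e), then $Z'DZ'^{-1} = J = ZDZ^{-1}$, so $W := Z^{-1}Z'$ commutes with $D$, and I must show $W = I$ given that $Z$ and $Z'$ agree on odd rows. I expect this to be the main obstacle. The strategy is to analyze the centralizer of $D$: since $D$ is conjugate (via $Z$) to $J$, the centralizer of $D$ in $\A$ is $Z^{-1}(\text{centralizer of }J)Z$, so it suffices to understand matrices commuting with $J$. The matrix $J$ is a direct sum, indexed by odd $d$, of infinite Jordan-type blocks on the chains $d \mid 2d \mid 4d \mid \cdots$ (with eigenvalue $1$), and a matrix commuting with $J$ is determined by where it sends the "bottom" generators $e_d$, $d$ odd. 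Translating back through $Z$: condition (a) on $Z'$ pins down precisely the images under $Z'$ of the standard basis vectors indexed by odd integers, and one checks this forces the conjugating matrix $W$ (which commutes with $D$, hence corresponds to a $J$-commuting matrix) to act as the identity on each bottom generator, hence to be the identity everywhere. I would carry this out by an explicit descending induction on the $2$-adic valuation, using that column $d$ of $W$ (for $d$ odd) is $e_d$ by hypothesis and that commuting with $D$ propagates this to columns $2d, 4d, \dots$ via the relation $WD = DW$ read off entrywise. The delicate point is making the "propagation" argument rigorous in the infinite setting, ensuring the column-finiteness of $W$ is used to avoid circularity; this is where care is needed.
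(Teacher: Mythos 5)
Your proofs of (a)--(d) and (f) are fine and essentially the same as the paper's, with a mild variant for (b): you count chains $d=c_0\mid c_1\mid\cdots\mid c_k=j$ with proper steps, whereas the paper multiplies the Dirichlet series identity of Lemma~\ref{alphaproperties}(c) by $d^{-s}$; both are correct. Your proof of (e) is also correct; it is equivalent to the paper's but slightly less efficient. The paper simply observes that in $Z(D-I)=(J-I)Z$, the $n^{\mathrm{th}}$ row of \emph{each} side is, essentially by the construction of $Z$, equal to the $2n^{\mathrm{th}}$ row of $Z$ (for $(J-I)Z$ this is immediate from the definition of $J$; for $Z(D-I)$ it is because row $n=d2^k$ of $Z$ is row $d$ of $(D-I)^k$), so the equation holds trivially. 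Your reduction to $\sum_{t\mid n}\alpha_k(t)=\alpha_k(n)+\alpha_{k+1}(n)$ amounts to the same thing unfolded.

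The uniqueness argument, however, has a genuine gap, which you partly acknowledge. First there is a row/column confusion: condition (a) specifies the odd-indexed \emph{rows} of $Z'$, not the images $Z'e_d$ of the odd basis vectors (those are columns of $Z'$, which (a) does not determine). Consequently what you actually know about $W=Z^{-1}Z'$ is that its odd-indexed rows equal $e_d^T$, not its odd columns. Second, and more substantively, commuting with $D$ alone does not give a clean propagation from row $i$ to row $2i$: the entrywise relation $WD=DW$ reads $\sum_{t\mid j}W_{i,t}=\sum_{i\mid t}W_{t,j}$, whose right-hand side involves \emph{all} rows $t$ with $i\mid t$, not just row $2i$, so a single induction on $v_2(i)$ does not close. (Your plan can be repaired by instead working with $Z'Z^{-1}$, which commutes with $J$, since $J-I$ is a genuine shift on each chain $\{d,2d,4d,\dots\}$ and the relation $W'(J-I)=(J-I)W'$ does propagate row $i$ to row $2i$ cleanly; but that is not what you wrote.) The paper avoids all of this by never introducing an auxiliary matrix: it notes that (e) is equivalent to $Z(D-I)=(J-I)Z$, which reads entrywise as $\sum_{t\mid k,\,t<k}\alpha(i,t)=\alpha(2i,k)$, determining row $2i$ from row $i$ outright, so the odd rows (given by (a)) determine everything by induction on the $2$-adic valuation. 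That is the argument you should use.
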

\begin{proof}
Part (a) is by definition.
Part (b) follows from Lemma~\ref{alphaproperties}(c) upon multiplying
by the Dirichlet series with one term $d^{-s}$.
Part (c) is a special case of (b).
Part (d) also follows from (b).
Part (f) is then immediate from Lemma~\ref{bound}.
In the equation 
\begin{equation}\label{ZDJ}
Z(D-I)=(J-I)Z
\end{equation}
the $n^\mathrm{th}$ row of each side is equal 
to the $2n^\mathrm{th}$ row of $Z$.
This proves (e) since $Z$ is invertible by (d).
To prove the last statement we see that if (e) holds then by (\ref{ZDJ})
we have for all $i$ and $k\in\N$,
\begin{equation*}
\sum_{\begin{smallmatrix}j\mid k\\
                        j<k\end{smallmatrix}} \alpha(i,j)=\alpha(2i,k),
\end{equation*}
which determines $Z$ uniquely since the rows with odd index are 
specified by (a).
\end{proof}

Our aim is to determine $Z^{-1}$ explicitly.

For each prime $p$ and each integer $m$ let $v_p(m)$ denote
the exponent of the highest power of $p$ which divides $m$
and let $v(m)=\sum_pv_p(m)$.

\begin{lemma}\label{signchar}  We have for all $m\in\N$,
\begin{equation*}
\sum_{k=1}^{v(m)}(-1)^k
\alpha_k(m)=\begin{cases} (-1)^{v(m)},\quad \text{if $m$ is squarefree and $m>1$},\\
                      0\quad \text{otherwise.}\end{cases}
\end{equation*}     
\end{lemma}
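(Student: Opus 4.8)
The plan is to work prime-by-prime and exploit the multiplicative structure of the generating function identity in Lemma~\ref{alphaproperties}(c). Observe that $\sum_{k\geq 1}(-1)^k(\zeta(s)-1)^k = \frac{-(\zeta(s)-1)}{1+(\zeta(s)-1)} = \frac{1-\zeta(s)}{\zeta(s)} = \frac{1}{\zeta(s)} - 1$, at least as an identity of formal Dirichlet series (the sum is finite in each coefficient because $\alpha_k(m)=0$ for $k>v(m)$ by Lemma~\ref{alphaproperties}(b) and the fact that any factorization of $m$ into factors $>1$ has at most $v(m)$ parts). Comparing the coefficient of $m^{-s}$ on both sides, the left-hand side is exactly $\sum_{k=1}^{v(m)}(-1)^k\alpha_k(m)$, while the coefficient of $m^{-s}$ in $\frac{1}{\zeta(s)}-1$ is $\mu(m)$ for $m>1$ and $0$ for $m=1$, where $\mu$ is the M\"obius function. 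Since $\mu(m) = (-1)^{v(m)}$ when $m$ is squarefree (then $v(m)=\omega(m)$, the number of prime divisors) and $\mu(m)=0$ otherwise, this is precisely the claimed formula.

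First I would make the convergence/finiteness point rigorous: for fixed $m$, only finitely many $k$ contribute to the coefficient of $m^{-s}$ in $\sum_k(-1)^k(\zeta(s)-1)^k$, so the manipulation $\sum_{k\geq 1}(-1)^k x^k = -x/(1+x)$ may be applied with $x = \zeta(s)-1$ coefficientwise without any analytic hypothesis; this is legitimate because $\zeta(s)-1 = \sum_{m\geq 2}m^{-s}$ has zero constant term, so $(\zeta(s)-1)^k$ has all coefficients in degrees $m<2^k$ equal to zero (Lemma~\ref{alphaproperties}(b)), and the partial sums stabilize in each fixed degree. Then I would identify the resulting series: $(1+(\zeta(s)-1))\cdot(\tfrac{1}{\zeta(s)}-1) = 1 - \zeta(s)+\tfrac{1}{\zeta(s)} \cdot \text{(stuff)}$... more cleanly, $\zeta(s)\cdot\big(\tfrac{1}{\zeta(s)}-1\big) = 1-\zeta(s) = -(\zeta(s)-1)$, which rearranges to $\tfrac{1}{\zeta(s)}-1 = \dfrac{-(\zeta(s)-1)}{\zeta(s)} = \dfrac{-(\zeta(s)-1)}{1+(\zeta(s)-1)} = \sum_{k\geq 1}(-1)^k(\zeta(s)-1)^k$. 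Finally, reading off the $m$-th Dirichlet coefficient of $\zeta(s)^{-1}$ as $\mu(m)$ (the standard Euler-product fact) and translating $\mu(m)$ into the case division by squarefreeness finishes the argument.

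An alternative, more self-contained route avoids invoking $\zeta(s)^{-1}=\sum\mu(m)m^{-s}$: one can prove the identity directly by induction on $v(m)$ using the recursion~\eqref{count}. Summing $(-1)^k\alpha_k(m)$ over $k$ and applying \eqref{count} telescopes, relating $\sum_k(-1)^k\alpha_k(m)$ to $\sum_{d\mid m,\, d<m}\sum_k(-1)^k\alpha_k(d)$ plus a boundary term; the induction hypothesis then lets one evaluate the divisor sum, and a short computation distinguishing the squarefree and non-squarefree cases (in the latter, some prime $p^2\mid m$, and the contributions cancel) yields the result. I would likely present the generating-function proof as the main one since it is shortest, with the M\"obius identification being the one genuinely external ingredient.

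The main obstacle I anticipate is purely bookkeeping: justifying the coefficientwise geometric-series manipulation cleanly in the formal Dirichlet series algebra $\Dss$ (or equivalently in the algebra of arithmetic functions), and making sure the finiteness of $\sum_{k=1}^{v(m)}$ versus the formally infinite sum $\sum_{k\geq 1}$ is handled without sleight of hand. Once that is in place the rest is immediate from Lemma~\ref{alphaproperties}(c) and the elementary properties of $\mu$, so there is no deep difficulty — the lemma is essentially the statement that $\zeta(s)^{-1} = \sum_k(-1)^k(\zeta(s)-1)^k$ read off in coordinates.
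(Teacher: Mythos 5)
Your proof is correct and rests on the same core identity the paper uses---$\sum_{k\geq1}(-1)^kx^k=-x/(1+x)$ applied to a shifted zeta function---but packages it differently. You work globally in the formal Dirichlet algebra $\Dss$, identify $\sum_{k\geq1}(-1)^k(\zeta(s)-1)^k$ with $\zeta(s)^{-1}-1$, and then read off coefficients via the M\"obius function. The paper instead works locally: for a fixed $m$ with $r$ distinct prime factors it passes to $\C[[t_1,\ldots,t_r]]$, sets $y=\prod_{i}(1-t_i)^{-1}-1$, and computes $\sum_{k\geq1}(-1)^ky^k=\prod_i(1-t_i)-1$, a polynomial whose monomial coefficients are visible by inspection (namely $(-1)^r$ when all exponents equal $1$, and $0$ otherwise). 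This sidesteps the convergence bookkeeping you rightly flag---$\C[[t_1,\ldots,t_r]]$ is complete and $y$ lies in the maximal ideal, so the geometric series manipulation is automatic---and it avoids quoting $\zeta(s)^{-1}=\sum_n\mu(n)n^{-s}$ as an external ingredient, since the squarefree dichotomy simply falls out of the shape of $\prod_i(1-t_i)$. In substance the two arguments are the same computation; yours is marginally shorter for a reader who already has M\"obius inversion at hand, while the paper's is marginally more self-contained. (The paper also records an alternative enumerative proof using symmetric-group characters, but that is in a subsequent remark rather than the proof proper.)
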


\begin{proof}
The case $m=1$ is trivial.
Suppose $m=p_1^{\lambda_1}p_2^{\lambda_2}\cdots p_r^{\lambda_r}$, with 
$\lambda_1$, $\lambda_2$,\dots, $\lambda_r\geq 1$ and $r\geq 1$.
In the ring of formal power series $\C[[t_1,\ldots,t_r]]$ 
in $r$ indeterminates we set
\begin{equation}
\begin{aligned}
y&=\frac{1}{(1-t_1)(1-t_2)\cdots(1-t_r)}-1\\
  &=\sum_{(n_1,\ldots,n_r)\in(\N\cup\{0\})^r}\alpha_1(p_1^{n_1}\cdots p_r^{n_r})t_1^{n_1}\cdots t_r^{n_r}.
\end{aligned}
\end{equation}
Then for $k\geq1$
\begin{equation}
y^k=\sum_{(n_1,\ldots,n_r)\in(\N\cup\{0\})^r}
\alpha_k(p_1^{n_1}\cdots p_r^{n_r})t_1^{n_1}\cdots t_r^{n_r}.
\end{equation}
Then we have
\begin{equation}
\begin{aligned}
\prod_{i=1}^r(1-t_i)-1&=\frac{-y}{1+y}\\
                      &=\sum_{k\in\N}(-1)^ky^k\\
                      &=\sum_{(n_1,\ldots,n_r)\in(\N\cup\{0\})^r}[\sum_{k\in\N}(-1)^k\alpha_k(p_1^{n_1}\cdots p_r^{n_r})]t_1^{n_1}\cdots t_r^{n_r}.
\end{aligned}
\end{equation}
The lemma follows by equating the coefficients of monomials.
\end{proof}
\begin{remark}
The above proof is similar to the argument in \cite{L}, p.21, used to show that $\sum_k(-1)^k\alpha_k(m)/k$ is equal to $1/h$ if $m$ is the $h$-th power of a prime, and zero otherwise. The lemma has also the following enumerative proof,
based on  another combinatorial interpretation of the
sets $A_k(m)$. From the above factorization of $m$
let $\lambda$ be the partition of $n$ defined by the 
$\lambda_i$. Let $N=\{1,\ldots, n\}$ and let
 $F_\lambda$ be the set of functions
$h:N\rightarrow\{p_1,\ldots,p_r\}$ such that
$\abs{h^{-1}(p_i)}=\lambda_i$ for $i=1$,\dots,$r$.
The symmetric group $S_n$ acts transitively on the right of $F_\lambda$ 
by the rule $(h\sigma)(y)=h(\sigma(y))$, $y\in N$, $\sigma\in S_n$.
The stabilizer $S_\lambda$ of the function mapping the first $\lambda_1$ 
elements to $p_1$, the next $\lambda_2$ elements to $p_2$, etc. is
isomorphic to $S_{\lambda_1}\times S_{\lambda_2}\times\cdots\times S_{\lambda_r}$.
A $k$-decomposition of $n$ is a $k$-tuple $(n_1,\ldots,n_k)$
of integers $n_i\geq 1$ such that $n_1+n_2+\cdots+n_k=n$.
Let $\Pi=\{\sigma_1,\ldots\sigma_{n-1}\}$ be the set of fundamental
reflections, with $\sigma_i=(i,i+1)$. The subgroup $W_K$ of $S_n$
generated by a subset $K$ of $\Pi$ is called a 
standard parabolic subgroup of rank $\abs K$. 
Given a $k$-decomposition $(n_1,\ldots,n_k)$ of $n$,
we have a set decomposition of $N$ into subsets
$N_1=\{1,\ldots, n_1\}$, $N_2=\{n_1+1,\ldots, n_1+n_2\}$, \dots, $N_k=\{n_1+\cdots+n_{k-1}+1,\ldots, n\}$. The stabilizer of this decomposition is
a standard parabolic subgroup of rank $n-k$ and this correspondence
is a bijection between $k$-decompositions and standard parabolic subgroups
of rank $n-k$.

For each pair $((n_1,\ldots,n_k), h)$ consisting of a $k$-decomposition
and a function $h\in F_\lambda$, we obtain an element
$(m_1,\ldots, m_k)\in A_k(m)$ by setting
$m_i=\prod_{j\in N_i}h(j)$. Every element of $A_k(m)$
arises in this way and two pairs define the same element
of $A_k(m)$ if and only if the $k$-decompositions are
equal and the corresponding functions are in the same orbit
under the action of the parabolic subgroup of the $k$-decomposition.

Thus, we have
\begin{equation*}
\alpha_k(m)=\abs{A_k(m)}=\sum_{\begin{smallmatrix}K\subseteq\Pi\\
\abs{K}=n-k\end{smallmatrix}}\abs{\{\text{$W_K$-orbits on $F_\lambda$}\}}.
\end{equation*}
The number of $W_K$-orbits on $F_\lambda$
can be expressed as the inner product of permutation characters, so
\begin{equation*}
\alpha_k(m)=\sum_{\begin{smallmatrix}K\subseteq\Pi\\
\abs{K}=n-k\end{smallmatrix}}\langle 1_{W_K}^{S_n}, 1_{S_\lambda}^{S_n}\rangle.
\end{equation*}
Now, it is a well known fact \cite{So} that
\begin{equation*}
\sum_{K\subseteq\Pi}(-1)^{\abs{K}}1_{W_K}^{S_n}=\epsilon,
\end{equation*}
where $\epsilon$ is the sign character. Hence,
\begin{equation*}
\begin{aligned}
\sum_{k=1}^n(-1)^k\alpha_k(m)
&=(-1)^n\langle \sum_{K\subseteq\Pi}(-1)^{\abs K}1_{W_K}^{S_n}, 1_{S_\lambda}^{S_n}\rangle\\
&=(-1)^n\langle \epsilon, 1_{S_\lambda}^{S_n}\rangle\\
&=(-1)^n\langle \epsilon, 1\rangle_{S_\lambda}\\
&=\begin{cases} (-1)^n,\quad\text{if $\lambda=1^n$},\\
                   0,\quad\text{otherwise.}\end{cases}
\end{aligned}
\end{equation*}
\end{remark}

Let $X$ be the diagonal matrix with $(i,i)$ entry equal to
$(-1)^{v_2(i)}$, for $i\in\N$.
\begin{theorem}\label{Zinverse} $Z^{-1}=XZX$. In particular, $Z^{-1}\in\DR_0$. 
\end{theorem}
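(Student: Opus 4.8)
The plan is to reduce the statement to a combinatorial identity for the numbers $\alpha_k(m)$ and then to prove that identity by a generating function computation whose heart is that a certain M\"obius transformation is an involution.

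First, $X^2=I$ since $X$ is diagonal with diagonal entries $\pm1$, and $XZX$ is upper unitriangular because $Z$ is (Lemma~\ref{Zproperties}(d)); a column-finite upper unitriangular matrix is invertible in $\A$ with upper unitriangular inverse, so the assertion $Z^{-1}=XZX$ is equivalent to $ZXZ=X$, i.e. to the entrywise identities
\begin{equation}\label{entryid}
\sum_{k\in\N}(-1)^{v_2(k)}\alpha(i,k)\,\alpha(k,j)=(-1)^{v_2(i)}\delta_{i,j}\qquad(i,j\in\N).
\end{equation}
Once \eqref{entryid} is proved, the remaining claim $Z^{-1}\in\DR_0$ is immediate, since $X\in\DR_0$ trivially, $Z\in\DR_0$ by Lemma~\ref{Zproperties}(f), and $\DR_0$ is a ring by Lemma~\ref{ivanish}.

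I would next reduce \eqref{entryid} to the case where $i$ is a power of $2$. If $i$ is odd, \eqref{entryid} holds by Lemma~\ref{Zproperties}(a). If $i=d2^a$ with $d$ odd and $a\ge1$, then by Lemma~\ref{Zproperties}(b) every $k$ with $\alpha(i,k)\ne0$ has $d$ dividing its odd part, and then by Lemma~\ref{Zproperties}(a),(b) the condition $\alpha(k,j)\ne 0$ forces $d\mid j$; hence both sides of \eqref{entryid} vanish when $d\nmid j$, while when $d\mid j$ the substitution $k=dk'$ together with Lemma~\ref{Zproperties}(c) identifies \eqref{entryid} for the pair $(i,j)$ with \eqref{entryid} for the pair $(2^a,j/d)$. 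Writing $i=2^a$ and $j=e2^g$ with $e$ odd and using Lemma~\ref{Zproperties}(a),(b), the identity \eqref{entryid} becomes
\begin{equation}\label{combinid}
\sum_{c\mid e}\ \sum_{b\ge0}(-1)^b\,\alpha_a(c2^b)\,\alpha_b\!\bigl((e/c)2^g\bigr)=(-1)^a\,\delta_{e,1}\,\delta_{g,a},
\end{equation}
where for each fixed $e$ only finitely many $b$ contribute, because $\alpha_b(f2^g)\ne0$ forces $b\le v(f)+g$.

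Now for the generating function argument. Put $\eta=\eta(s):=(1-2^{-s})\zeta(s)-1=\sum_{c\ \text{odd},\,c>1}c^{-s}$ and, writing $[x^b]$ for extraction of the coefficient of $x^b$, set $\phi_{a,b}(s)=\sum_{c\ \text{odd}}\alpha_a(c2^b)c^{-s}$. Since $\zeta(s)-1=\frac{\eta+2^{-s}}{1-2^{-s}}$, Lemma~\ref{alphaproperties}(c) together with comparison of the coefficients of the powers of $2^{-s}$ yields $\phi_{a,b}(s)=[x^b]\psi(x)^a$, where $\psi(x)=\frac{\eta+x}{1-x}$. Multiplying \eqref{combinid} by $e^{-s}$ and summing over odd $e$ (legitimate by the finiteness just noted) turns it, for all $a,g\ge0$, into $\sum_{b\ge0}(-1)^b\phi_{a,b}(s)\phi_{b,g}(s)=(-1)^a\delta_{a,g}$. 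Since $\sum_b\phi_{a,b}(s)y^b=\psi(y)^a$, the left-hand side of this equals $[y^g]\psi\bigl(-\psi(y)\bigr)^a$, so it suffices to show $\psi(-\psi(y))=-y$. But $x\mapsto-\psi(x)$ is the M\"obius transformation with matrix $\left(\begin{smallmatrix}-1&-\eta\\-1&1\end{smallmatrix}\right)$, whose square is $(1+\eta)I$; hence $-\psi$ is an involution, $\psi(-\psi(y))=-y$, and $[y^g](-y)^a=(-1)^a\delta_{a,g}$, as required.

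The step that needs the most care — and the main obstacle — is the identity $\sum_b\bigl([x^b]\psi(x)^a\bigr)(-\psi(y))^b=\psi(-\psi(y))^a$, for $-\psi(y)$ has nonzero constant term $-\eta$ and the sum over $b$ is infinite. I would make this rigorous by working in the ring of formal power series in $x$ over the completion (for the filtration by smallest index) of the ring of Dirichlet series supported on the odd positive integers: there $\eta$ lies in the ideal of series supported on integers $>1$, so $\eta^k$ is supported on integers $\ge3^k$, and this makes the displayed sum converge coefficientwise and the substitution legitimate. Alternatively one can bypass this point by proving $\sum_b(-1)^b\phi_{a,b}(s)\phi_{b,g}(s)=(-1)^a\delta_{a,g}$ directly, using the closed formula $\alpha_a(c2^b)=\sum_{j\ge0}\binom{a}{j}\alpha_j(c)\binom{b+j-1}{a-1}$ (obtained by separating, in an ordered factorization of $c2^b$, the factors carrying the odd part from those that are pure powers of $2$) and a couple of elementary binomial identities; the involution argument is, however, considerably shorter.
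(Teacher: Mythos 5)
Your proof is correct, and it takes a genuinely different route from the paper. The paper invokes the uniqueness characterization at the end of Lemma~\ref{Zproperties} (a matrix satisfying (a) and (e) must be $Z$), reduces $D(XZX)=(XZX)J$ to a recurrence in the $\alpha(i,j)$, and then discharges that recurrence via Lemma~\ref{signchar} (the sign-character identity, proved by a power-series factorization) together with Lemma~\ref{alphacount}. You instead prove $ZXZ=X$ entrywise: after using Lemma~\ref{Zproperties}(a)--(c) to reduce to $i$ a pure power of $2$ exactly as the paper does, you bundle the resulting identity into a Dirichlet-series/power-series identity in the $2$-adic variable $x=2^{-s}$ over the ring of odd-supported Dirichlet series, and the whole thing collapses to the observation that $-\psi$, $\psi(x)=\frac{\eta+x}{1-x}$, is an involution because its matrix squares to $(1+\eta)I$. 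This sidesteps Lemmas~\ref{signchar} and~\ref{alphacount} entirely and is arguably shorter once the formalism is set up; the paper's proof, by contrast, never leaves the realm of finite combinatorial sums and so has no convergence issues to address. You correctly flag the one real subtlety of your approach --- substituting a series with nonzero ``constant term'' $-\eta$ into $\psi(x)^a$ --- and your fix is sound: you must use the filtration of the odd-supported Dirichlet series ring by smallest support index (where $\eta^k$ is supported on $c\ge 3^k$), \emph{not} the $\mathfrak m$-adic filtration (e.g.\ $p^{-s}$ for a large odd prime $p$ is small in the first but not in the second), and the ring is already complete for the former, so the coefficientwise convergence you need does hold. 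A small bookkeeping point worth making explicit in a final write-up: the derivation of your identity \eqref{combinid} from Lemma~\ref{Zproperties}(b) requires the convention $\alpha_0(m)=\delta_{m,1}$ (equivalently $(\zeta(s)-1)^0=1$) so that the $b=0$ terms match $\alpha(c,j)=\delta_{c,j}$; with that convention the identity $\sum_b\phi_{a,b}y^b=\psi(y)^a$ holds for $a=0$ as well and the $a\ge1$ restriction is harmless.
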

\begin{proof}
If $i$ is odd then the $i^\mathrm{th}$ row of $Z$ is zero except for $1$
in the $i^\mathrm{th}$ column, so the same holds for $XZX$.
By the last assertion of Lemma~\ref{Zproperties} it is sufficient 
to show that
\begin{equation*} D(XZX)=(XZX)J
\end{equation*}
or, equivalently,
\begin{equation*} (XDX)Z=Z(XJX).
\end{equation*}
The matrices $XDX=(d'_{i,j})_{i,j\in\N}$ and $XJX=(c'_{i,j})_{i,j\in\N}$ 
are given by
\begin{equation*}
d'_{i,j}=\begin{cases}(-1)^{v_2(i)+v_2(j)},\quad \text{if $i\mid j$},\\
                    0\quad \text{otherwise}.\end{cases}, \qquad
c'_{i,j}=\begin{cases}1,\quad \text{if $j=i$},\\
                     -1,\quad \text{if $j=2i$},\\
                     0\quad \text{otherwise}.\end{cases}
\end{equation*}
 Thus, we must show that
\begin{equation}\label{recurrence}
\sum _{m\geq 1}(-1)^{v_2(m)}\alpha(im,j)=
\begin{cases}\alpha(i,j),\quad\text{if $j$ is odd,}\\
\alpha(i,j)-\alpha(i,j/2),\quad\text{if $j$ is even.}
\end{cases}
\end{equation}
It is sufficient to consider the case $i=2^k$, for $k\geq1$,
by Lemma~\ref{Zproperties}(c). In this case, 
the left hand side of (\ref{recurrence}) can be rewritten as
\begin{equation}\label{summation}
\begin{aligned}
\sum_{\begin{smallmatrix}d\mid j\\
                         \text{$d$ odd}\end{smallmatrix}}&
\sum_{e=0}^{v(j)-k-v(d)}(-1)^e\alpha(2^{k+e},j/d)\\
=(-1)^k\sum_{\begin{smallmatrix}d\mid j\\
                         \text{$d$ odd}\end{smallmatrix}}&
\sum_{r=k}^{v(j/d)}(-1)^r\alpha(2^r,j/d).\end{aligned}
\end{equation}

Suppose that we can prove for all $j$, that
\begin{equation}\label{claim}
(-1)^k\sum_{d\mid j}\sum_{r=k}^{v(j/d)}(-1)^r\alpha(2^r,j/d)=\alpha(2^k, j).
\end{equation}
Then we will have proved (\ref{recurrence}) if $j$ is odd.
If $j$ is even, we note that $d$ is a divisor of $j/2$
if and only if $2d$ is an even divisor of $j$, so that (\ref{claim})
implies
\begin{equation*}
\begin{aligned}
\alpha(2^k,j/2)
&=(-1)^k \sum_{d\mid (j/2)}
\sum_{r=k}^{v((j/2)/d)}(-1)^r\alpha(2^r,(j/2)/d)\\
&=(-1)^k\sum_{\begin{smallmatrix}d'\mid j\\
                         \text{$d'$ even}\end{smallmatrix}}
\sum_{r=k}^{v(j/d')}(-1)^r\alpha(2^r, j/d').
\end{aligned}
\end{equation*}
Thus, from (\ref{summation}) we see that
(\ref{recurrence}) also follows from (\ref{claim}) when $j$ is even.
It remains to prove (\ref{claim}). We can assume $j>1$, 
by Lemma~\ref{alphaproperties}(a).
Lemma~\ref{signchar}, applied to the left hand side of (\ref{claim}), yields
\begin{equation}\label{simpler}
(-1)^{k-1} +(-1)^{k-1}\sum_{d\mid j}\left(\sum_{r=1}^{k-1}(-1)^r\alpha_r(j/d)\right)
\end{equation}
because the total contribution from the squarefree case of
Lemma~\ref{signchar} is
\begin{equation*}
(-1)^{k}\sum_{\begin{smallmatrix}d\mid j\\
                         \text{$j/d$ squarefree}\\
                          j/d >1\end{smallmatrix}}(-1)^{v(j/d)}=(-1)^{k-1}.
\end{equation*}
We can rewrite (\ref{simpler}) as
\begin{equation*}
 (-1)^{k-1}+\sum_{r=1}^{k-1}(-1)^{k-1-r}\sum_{d\mid j}\alpha_r(d),
\end{equation*}
which, by Lemma~\ref{alphacount} is equal to $\alpha_k(j)$.
This proves (\ref{claim}). 
\end{proof}

\section{Construction of representations}\label{constr}


Let $J_\infty$ be the ``infinite Jordan block'', indexed by $\N\times\N$,
defined by
\begin{equation*}
(J_\infty)_{i,j}=\begin{cases}1,\quad\text{if $j=i$ or  $j=i+1$,}\\
0 \quad\text{otherwise.}
\end{cases}
\end{equation*}

In the following theorem,  $T$ and $S$ are the generators of
$\SL(2,\Z)$ defined in (\ref{gens}).
\begin{theorem}\label{JTmodule} There exists a representation 
$\tau:\SL(2,Z)\rightarrow \A^\times$ with the following properties.
\begin{enumerate}
\item[(a)] Let $E_i$ be the subspace of $E$ spanned by
$\{e_1,\ldots,e_{2i}\}$, $i\in\N$. Then 
$$
0=E_0\subset E_1\subset E_2\subset\cdots 
$$
is a filtration of $\C\SL(2,\Z)$-modules and for each $i\in\N$
the  quotient module  $E_{i}/E_{i-1}$ is isomorphic to the 
standard $2$-dimensional $\C\SL(2,\Z)$-module.
\item[(b)] $\tau(T)=J_\infty$.
\item[(c)] $\tau(Y)$ is an integer matrix for every $Y\in\SL(2,\Z)$.
\item[(d)] There is a constant $C$ such that for all $i$ and $j$
we have $\abs{\tau(S)_{i,j}}\leq 2^{Cj}$.
\end{enumerate}
\end{theorem}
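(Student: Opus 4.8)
The plan is to build the representation $\tau$ explicitly by declaring how $T$ and $S$ act on the basis $\{e_n\}$, then verify the defining relations \eqref{rels} and the four listed properties. Since $\tau(T)=J_\infty$ is forced by (b), the content is the choice of $\tau(S)$. The filtration in (a) dictates that $\tau$ respects the chain $E_1\subset E_2\subset\cdots$ with each layer $E_i/E_{i-1}$ a copy of the standard module $V=\C^2$ of $\SL(2,\Z)$; so with respect to the basis $\{e_{2i-1},e_{2i}\}$ of the $i$-th layer, $T$ and $S$ must act as $\left(\begin{smallmatrix}1&1\\0&1\end{smallmatrix}\right)$ and $\left(\begin{smallmatrix}0&-1\\1&0\end{smallmatrix}\right)$ respectively. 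So $\tau(S)$ is block upper triangular (in the $2\times 2$ block decomposition) with diagonal blocks all equal to $\left(\begin{smallmatrix}0&-1\\1&0\end{smallmatrix}\right)$, and the freedom lies entirely in the strictly-above-diagonal blocks. The task is to pin these down.

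First I would set up the module-theoretic bookkeeping: an $E$ realizing (a) is an iterated extension of standard modules, and specifying such an extension is the same as choosing, compatibly, the connecting data in $\Ext^1_{\C\SL(2,\Z)}(V,V)$-type computations at each stage — but since we want integrality and an explicit matrix, I would instead just write down a candidate $\tau(S)$ directly and check relations. The natural move, given that $\tau(T)$ is the single infinite Jordan block (not a direct sum of $2\times 2$ blocks), is to let the "overlap" between consecutive layers be governed by $J_\infty$ itself; concretely one looks for $\tau(S)$ whose $(i,j)$ block is a fixed integer matrix depending only on $j-i$, i.e. $\tau(S)$ is block-Toeplitz, so that the relation $S^4=1$ and $(ST)^3=S^2$ become identities among generating functions / formal power series in one block variable. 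I would compute the block generating function $\sigma(x)=\sum_{k\ge 0}\Sigma_k x^k$ (with $\Sigma_0=\left(\begin{smallmatrix}0&-1\\1&0\end{smallmatrix}\right)$) and the corresponding series for $T$, impose $\sigma(x)^2 = -I$ (forcing $S^2=-I$ blockwise, consistent with the standard module where $S^2=-I$) and the braid-type relation, and solve recursively for the $\Sigma_k$. Each $\Sigma_k$ will come out to be an integer matrix by the recursion, giving (c); property (b) holds by construction; property (a) holds because block-upper-triangularity with the prescribed diagonal blocks is exactly the filtration statement.

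The main obstacle — and the real work — is property (d), the exponential bound $\abs{\tau(S)_{i,j}}\le 2^{Cj}$. Even once $\tau(S)$ is block-Toeplitz with integer blocks $\Sigma_k$, the entries of $\Sigma_k$ could in principle grow faster than exponentially in $k$; I expect the recursion derived from $\sigma(x)^2=-I$ together with the $(ST)^3=S^2$ relation to be a polynomial recursion of bounded degree with bounded coefficients, so that $\norm{\Sigma_k}$ grows at most like $R^k$ for some fixed $R$, and then choosing $C$ with $2^C\ge R$ gives (d) since $\abs{\tau(S)_{i,j}}$ for entries in block $k=j-i\le j$ is bounded by $\norm{\Sigma_k}\le R^k\le 2^{Cj}$. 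Verifying that the $\Sigma_k$ satisfy such a controlled recursion — essentially that the generating function $\sigma(x)$ is algebraic over $\C(x)$ with a singularity at finite distance from the origin — is where I would concentrate the effort; the relation-checking ($S^4=(ST)^6=1$, $S^2=(ST)^3$) is then a routine computation with the generating functions, using that the layer-wise actions already satisfy these relations so that only the strictly-upper-triangular part needs to be shown to vanish in the appropriate products.
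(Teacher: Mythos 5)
There is a fatal obstruction to your plan for constructing $\tau(S)$. You propose to take $\tau(T)=J_\infty$ \emph{and} $\tau(S)$ block-Toeplitz in the $2\times 2$ block decomposition, so that both lie in the algebra $\U$ of the paper and the defining relations become identities in $M_2(\C[[t]])$ via the isomorphism $\gamma$. But in $2\times 2$ blocks, $J_\infty$ has diagonal block $\left(\begin{smallmatrix}1&1\\0&1\end{smallmatrix}\right)$ and first superdiagonal block $\left(\begin{smallmatrix}0&0\\1&0\end{smallmatrix}\right)$, so $\gamma(J_\infty)=\left(\begin{smallmatrix}1&1\\t&1\end{smallmatrix}\right)$, which has determinant $1-t$. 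Any block-Toeplitz candidate $\sigma=\gamma(\tau(S))$ satisfying $\sigma^4=I$ would have $\det\sigma$ a constant fourth root of unity, while the relation $(\sigma\gamma(J_\infty))^6=I$ would force $(\det\sigma)^6(1-t)^6=1$ in $\C[[t]]$. This is impossible, since $(1-t)^6$ is not constant. Thus no block-Toeplitz $\tau(S)$ can close the relations against $\tau(T)=J_\infty$, and the recursion you intend to solve for the blocks $\Sigma_k$ has no solution.

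This determinant obstruction is precisely what the paper's construction is designed to avoid. The paper first builds a block-Toeplitz representation $\tau_1$ in $\U$ with $\tau_1(S)=\tS=\diag(S,S,\ldots)$ and $\tau_1(T)=\tJ$, where $\tJ$ is a different unipotent block-Toeplitz matrix whose image $\gamma(\tJ)=\left(\begin{smallmatrix}1&1+g(t)\\ g(t)&1+t\end{smallmatrix}\right)$ has determinant $1$ because $g^2+g=t$; this makes the relation-check a clean generating-function computation. Only afterwards is $\tJ$ conjugated to $J_\infty$ by an explicit unitriangular matrix $P$, and $P$ is deliberately \emph{not} block-Toeplitz. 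The real technical content of the proof is then exactly what your plan skips: computing $P$ and $P^{-1}$ entry by entry (formulas \eqref{pentries}, \eqref{qentries}) and proving Lemma~\ref{APbounds}, the exponential bound $\abs{p_{i,j}},\abs{q_{i,j}}\le 2^{3j}$, so that property (d) survives the conjugation $\tau(S)=P\tS P^{-1}$. Your remark that (d) is ``where I would concentrate the effort'' is the right instinct, but the effort must be spent on bounding the conjugating matrix, not on a block-Toeplitz $\tau(S)$, which cannot exist.
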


Later we will show (Theorem~\ref{uniqueness}) 
that there is a unique $\C\SL(2,\Z)$-module
with a filtration by standard modules and such that $T$ acts indecomposably
and unipotently on every $T$-invariant subspace. 

We define a sequence of integers  $\{b_n\}_{n\geq 0}$ recursively  by\footnote{As J-P. Serre has pointed out to us, this is the sequence of Catalan numbers,
up to signs.}
\begin{equation}\label{bn}
b_0=b_1=1, \quad b_n+\sum_{\begin{smallmatrix}i,j\geq1\\
i+j=n\end{smallmatrix}}b_ib_j=0 \quad\text{for all $n\geq 2$.}
\end{equation}

Let $\C[[t]]$  denote the ring of formal power series over $\C$ and
let $g(t)\in\C[[t]]$ be defined by
\begin{equation*}
1+g(t)=\sum_{k=0}^\infty b_{k}t^{k}.
\end{equation*}
Then the recurrence relations satisfied by the $b_i$ can be stated as
the equation
\begin{equation}\label{quadratic}
g(t)^2+g(t)=t.
\end{equation}
Thus,
\begin{equation*}
g(t)=\frac{-1+\sqrt{1+4t}}{2},
\end{equation*}
where the positive square root is taken since $g(t)$ has no constant term.
By Taylor expansion we obtain
\begin{equation}\label{taylor}
b_{m}=\frac{(-1)^{m-1}}{m}\binom{2m-2}{m-1},\quad(m\geq 2),\quad  b_1=b_0=1.
\end{equation}


Let
\begin{equation*}
 B_0=T, \qquad B_1=\begin{bmatrix}0&1\\1&1\end{bmatrix},
 \qquad B_i=\begin{bmatrix}0&b_i\\b_i&0\end{bmatrix}, \quad(i\geq 2)
\end{equation*}

and define
\begin{gather*}
\tJ=\begin{pmat}[{||||}]
 B_0&B_1&B_2&B_3&\hdots\cr\-
 0&B_0&B_1&B_2&\hdots\cr\-
 0&0&B_0&B_1&\hdots\cr\-
 0&0&0&B_0&\hdots\cr\-
 \vdots&\vdots&\vdots&\vdots&\ddots\cr
 \end{pmat},\\
\tS=\diag(S,S,\ldots),\\
\tR=-\tS\tJ.
\end{gather*}

For any ring $R$ let $M_n(R)$ denote the ring of 
$n\times n$ matrices over $R$.
Let $\U$ denote the ring of matrices of the form
\begin{equation}\label{defU}
U=\begin{pmat}[{||||}]
 X^{(0)}&X^{(1)}&X^{(2)}&X^{(3)}&\hdots\cr\-
 0&X^{(0)}&X^{(1)}&X^{(2)}&\hdots\cr\-
 0&0&X^{(0)}&X^{(1)}&\hdots\cr\-
 0&0&0&X^{(0)}&\hdots\cr\-
 \vdots&\vdots&\vdots&\vdots&\ddots\cr
 \end{pmat},
\end{equation}
where, for all $n\geq 0$,
\begin{equation*}
X^{(n)}=\begin{bmatrix}x^{(n)}_{1,1}&x^{(n)}_{1,2}\\
                      x^{(n)}_{2,1}&x^{(n)}_{2,2}\end{bmatrix}\in M_2(\C)
\end{equation*}
and the $X^{(n)}$ are repeated down the diagonals. 
For example, $\tS$, $\tJ$ and
$\tR$ all belong to $\U$.
The center $Z(\U)$ consists of those matrices
in which the submatrices $X^{(n)}$ are all scalar matrices.
The map $\C[[t]]\to Z(\U)$ sending 
$\sum_{n\geq0} a_nt^n$ to the matrix with $X^{(n)}=a_{n}I$, for all $n\geq 0$,
is a $\C$-algebra isomorphism, and extends to a
$\C[[t]]$-algebra isomorphism 
\begin{equation}\label{gamma}
\gamma:\U\to M_2(\C[[t]]), \qquad U\mapsto \begin{bmatrix}x_{1,1}(t)&x_{1,2}(t)\\
                        x_{2,1}(t)&x_{2,2}(t) \end{bmatrix},
\end{equation}
where
\begin{equation*}
x_{i,j}(t)=\sum_{n=0}^\infty x^{(n)}_{i,j}t^n,
\qquad\text{$i$, $j\in\{1,2\}$.} 
\end{equation*}
We have:
\begin{equation}\label{Qtmats}
\gamma(\tS)=\begin{bmatrix}0&-1\\
               1& 0 \end{bmatrix}, \quad
\gamma(\tJ)=\begin{bmatrix}1& 1+g(t)\\
               g(t)& 1+t \end{bmatrix},
 \quad
\gamma(\tR)=\begin{bmatrix}g(t)& 1+t\\
                -1& -1-g(t) \end{bmatrix}.
\end{equation}

\begin{lemma}\label{relations} 
\begin{enumerate}
\item[(a)] $\tS^2=-I$.
\item[(b)] $\tR^2+\tR+I=0$.
\item[(c)] There exists a representation $\tau_1$ of $G$ such that
$\tau_1(S)=\tS$ and $\tau_1(T)=\tJ$.
\end{enumerate}
\end{lemma}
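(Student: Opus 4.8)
The plan is to transfer all computations to $M_2(\C[[t]])$ via the isomorphism $\gamma$ of~\eqref{gamma}, where the relations become ordinary $2\times 2$ matrix identities over a commutative ring. For part~(a), since $\gamma$ is an algebra isomorphism and $\gamma(\tS)=\left(\begin{smallmatrix}0&-1\\1&0\end{smallmatrix}\right)$, we simply square this matrix to get $-I$, hence $\tS^2=-I$ in $\U$. For part~(b), I would compute $\gamma(\tR)^2+\gamma(\tR)+I$ directly from the displayed form of $\gamma(\tR)$ in~\eqref{Qtmats}: the entries involve only $g(t)$ and $t$, and every scalar identity that appears reduces, after expansion, to the single relation $g(t)^2+g(t)=t$ from~\eqref{quadratic}. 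For instance the $(1,1)$ entry of $\gamma(\tR)^2$ is $g(t)^2+(1+t)(-1)=g(t)^2-1-t=-1-g(t)$ by~\eqref{quadratic}, so adding the $(1,1)$ entry $g(t)$ of $\gamma(\tR)$ and then $1$ gives $0$; the other three entries are checked the same way. Thus $\gamma(\tR)^2+\gamma(\tR)+I=0$, and applying $\gamma^{-1}$ yields~(b).

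For part~(c), the point is that $S^4=1$, $(ST)^6=1$, $S^2=(ST)^3$ is a presentation of $G$ (as recalled after~\eqref{rels}), so to get a representation $\tau_1$ with $\tau_1(S)=\tS$, $\tau_1(T)=\tJ$ it suffices to verify that $\tS$ and $\tJ$ satisfy these three relations in $\U^\times$ (and that they are invertible, which follows since their images under $\gamma$ have unit determinant in $\C[[t]]$: $\det\gamma(\tS)=1$ and $\det\gamma(\tJ)=(1+t)-g(t)(1+g(t))=1+t-g(t)-g(t)^2=1$ by~\eqref{quadratic}). From~\eqref{Qtmats} one has $\gamma(\tS\tJ)=\gamma(\tS)\gamma(\tJ)=\gamma(\tR)$ by a one-line multiplication, so $\tS\tJ=\tR$; hence $\tS^4=(\tS^2)^2=(-I)^2=I$, and $(\tS\tJ)^3=\tR^3$. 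Using~(b), $\tR^3=\tR\cdot\tR^2=\tR(-\tR-I)=-\tR^2-\tR=I$, so $(\tS\tJ)^6=I$ and also $(\tS\tJ)^3=I=(-I)^2$... wait, we need $\tS^2=(\tS\tJ)^3$, i.e. $-I=\tR^3$. But $\tR^2+\tR+I=0$ gives $\tR^3-I=(\tR-I)(\tR^2+\tR+I)=0$, so $\tR^3=I$, not $-I$. This forces me to recheck: in fact the correct relation to verify is the presentation as stated, and one finds $\tR=\tS\tJ$ has order $3$ while $\tS$ has order $4$ with $\tS^2=-I$ central; the relation $S^2=(ST)^3$ then reads $-I=I$, which is false, so I must instead observe that $\gamma(\tS\tJ)$ may differ from $\gamma(\tR)$ by sign or that the intended generator relation uses $\tR=-\tS\tJ$ (indeed $\tR$ is \emph{defined} as $-\tS\tJ$). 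So $\gamma(ST)$-image is $-\gamma(\tR)$, giving $(\tS\tJ)^3=(-\tR)^3=-\tR^3=-I=\tS^2$, and $(\tS\tJ)^6=I$; all three relations hold.

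Assembling: I would first record that $\gamma$ is a $\C$-algebra isomorphism (already stated), then prove (a) and (b) by the two-by-two matrix computations above in $M_2(\C[[t]])$, pulling back along $\gamma^{-1}$. Then for (c) I would check $\tS\tJ=-\tR$ via~\eqref{Qtmats}, deduce from (a) and (b) that $\tS$, $\tJ$ satisfy the defining relations~\eqref{rels} of $G$, note invertibility in $\U$ from the determinant-one computations, and invoke the universal property of the presentation to obtain $\tau_1$. The only genuinely delicate point is bookkeeping with the sign in $\tR=-\tS\tJ$ so that the relation $S^2=(ST)^3$ comes out correctly as $-I=-\tR^3=-I$ rather than $-I=I$; everything else is routine manipulation of the explicit matrices in~\eqref{Qtmats} together with the single power-series identity~\eqref{quadratic}.
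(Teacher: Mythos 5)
Your proof is correct and follows essentially the same route as the paper: verify (a) and (b) by direct $2\times 2$ computation in $M_2(\C[[t]])$ via $\gamma$ using the single identity $g(t)^2+g(t)=t$, then deduce (c) by checking that $\tS,\tJ$ satisfy the presentation \eqref{rels}, for which the sign in $\tR=-\tS\tJ$ is exactly what makes $\tS^2=(\tS\tJ)^3=-I$ come out right. When you write this up, keep only the corrected computation $(\tS\tJ)^3=(-\tR)^3=-\tR^3=-I$ and drop the exploratory detour where you momentarily set $\tS\tJ=\tR$.
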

\begin{proof}
Part (a) is obvious and (b) is easy to check by direct computation
using (\ref{Qtmats}) and (\ref{quadratic}). 
By (a) and (b), the elements $\tS$ and $\tJ$ 
satisfy the defining relations (\ref{rels}) for $\SL(2,\Z)$,
so (c) holds.
\end{proof}

The representation $\tau_1$ satisfies all the conditions of
Theorem~\ref{JTmodule} except for (b). To complete the
proof of Theorem~\ref{JTmodule} we shall conjugate this
representation by an upper unitriangular integer matrix $P$ 
such that $P\tJ P^{-1}=J_\infty$. In order to check that
$P\tS P^{-1}$ satisfies condition (d) of Theorem~\ref{JTmodule}
we will need to compute $P$ and its inverse explicitly.

\subsection{Transforming $\tJ$ into Jordan form}\label{transP}
A matrix $P$ such that $P\tJ P^{-1}=J_\infty$  can be found
by following the usual method for computing Jordan blocks. Thus, for $n\in\N$,
we define the $n^\mathrm{th}$ row of $P$ to be the 
first row of $(\tJ-I)^{n-1}$, setting $(\tJ-I)^0=I$.
Then $P$ is upper unitriangular, hence invertible,
and, from its definition, $P$ satisfies the equivalent equation 
\begin{equation*}
P(\tJ-I)=(J_\infty-I)P.
\end{equation*}

We now compute the entries of $P$ explicitly. In order to do this,
we use the isomorphism $\gamma$ of (\ref{gamma}). 
Let $\tH=\gamma(\tJ-I)$. Then
\begin{equation*}
\tH=\begin{bmatrix}0&1+g(t)\\
                   g(t)&t\end{bmatrix}.
\end{equation*}
It easy to compute the powers of $\tJ-I$ by diagonalizing $\tH$. Since
$g(t)^2+g(t)=t$, the characteristic polynomial of $\tH$ is
$\chi(x)=x^2-tx-t$. Let $\lambda_1$ and $\lambda_2$ be the roots of this polynomial
in some extension field and for $n\geq 0$ let 
$h_n=(\lambda_1^{n+1}-\lambda_2^{n+1})/(\lambda_1-\lambda_2)$ 
be the complete symmetric polynomial of degree $n$ 
in two variables, evaluated at 
$(\lambda_1,\lambda_2)$. Then $h_n$ is a polynomial in the coefficients
of $\chi(x)$, so it is a polynomial in $t$. We have $h_0=1$ and $h_1=t$.
A straightforward computation shows that, for $n\geq 2$,
\begin{equation}\label{tHpower}
\tH^{n}=
\begin{bmatrix} th_{n-2}&(1+g(t))h_{n-1}\\
                g(t)h_{n-1}& h_{n}
\end{bmatrix}.
\end{equation}
It follows from (\ref{tHpower}) and the equation $g(t)^2+g(t)=t$ that the 
polynomials $h_n$ satisfy the recurrence
\begin{equation*}
h_n=th_{n-1}+th_{n-2},\quad(n\geq 2),\qquad h_0=1, \quad h_1=t.
\end{equation*}
By inspection, the solution is
\begin{equation*}
h_n=\sum_{r=0}^{\lfloor\frac{n}{2}\rfloor}\binom{n-r}{r}t^{n-r}.
\end{equation*}
Thus we can compute the entries of $P$ as coefficients of the powers of $t$ in 
the top rows of the $\tH^n$. For $\ell\geq 3$ and $s\geq 0$, we have
\begin{equation}\label{pentries}
\begin{aligned}
p_{\ell,2s+1}&=\text{coefficient of $t^{s}$ in $th_{\ell-3}$}\\
&=\binom{s-1}{\ell-s-2},\\
p_{\ell,2s+2}&=\text{coefficient of $t^{s}$ in $(1+g(t))h_{\ell-2}$}\\
&=\sum_{k=0}^{\lfloor s+1-\frac{\ell}{2}\rfloor}b_k\binom{s-k}
{\ell+k-s-2}.
\end{aligned}
\end{equation}
Here and elsewhere, we employ the convention for binomial coefficients
that $\binom{a}{b}=0$ unless $a\geq b\geq 0$.

We now turn to the computation of $P^{-1}$. Suppose a matrix
$Q=(q_{i,j})_{i,j\in\N}$ satisfies the two conditons
\begin{equation}\label{Pinv}
\tJ Q=Q J_\infty \qquad\text{and}\qquad q_{1,j}=\delta_{1,j}.
\end{equation}
The first conditon implies that $PQ$ commutes with $J_\infty$
and the second that $(PQ)_{1,j}=\delta_{1,j}$, from which
it follows that $PQ=I$ and $Q=P^{-1}$. We find a matrix
$Q$ satisfying (\ref{Pinv}) by first finding a matrix $A$ 
such that 
\begin{equation}\label{Acond}
(\tJ-I)A=A(J_\infty-I)
\end{equation}
and then modifying it. To compute  $A$ we must
first enlarge the ring $\U$.
Let $\Uhat$ denote the set of matrices of the form
\begin{equation*}
W=\begin{pmat}[{|||||}]
\hdots& X^{(m)}&X^{(m+1)}&X^{(m+2)}&X^{(m+3)}&\hdots\cr\-
\hdots&X^{(m-1)} &X^{(m)}&X^{(m+1)}&X^{(m+2)}&\hdots\cr\-
\hdots&X^{(m-2)}&X^{(m-1)}&X^{(m)}&X^{(m+1)}&\hdots\cr\-
\hdots&X^{(m-3)}&X^{(m-2)}&X^{(m-1)}&X^{(m)}&\hdots\cr\-
\ddots&\ddots&\ddots&\ddots&\ddots&\ddots\cr
 \end{pmat},
\end{equation*}
where the blocks $X^{(m)}\in M_2(\C)$, for $m\in\Z$,
are repeated down the diagonals and have
the property that for some $m_0\in\Z$, which may depend on $W$,
$X^{(m)}=0$ whenever $m<m_0$.  
We shall refer to the two columns of $W$ headed by $X^{(m)}$
as the $[m,1]$ column and the $[m,2]$ column, respectively.
For $W\in\Uhat$ and $m\in\Z$, we denote by $W(m)$
the submatrix of $W$ whose first column is the $[m,1]$ column of $W$.
To be concise, we can write $W=(X^{(m)})_{m\in\Z}$, since
the top row determines the whole matrix. 
A product is defined as follows. Let $W'=(Y^{(m)})_{m\in\Z}\in \Uhat$.
Then $WW'=(Z^{(m)})_{m\in\Z}$, where 
\begin{equation*}
Z^{(m)}=\sum_{i+j=m}X^{(i)}Y^{(j)}.
\end{equation*}
This product can be computed as an ordinary matrix product 
as follows. 
Let $m_0$ and $n_0$ be chosen such that
$X^{(m)}=0$ for all $m<m_0$ and $Y^{(n)}=0$ for all $n<n_0$.
Then $WW'$ is obtained from
the ordinary matrix product $W(m_0)W'(n_0)$
by adjoining columns of zeros to the left and 
declaring the first column of $W(m_0)W'(n_0)$ to be the $[m_0+n_0,1]$
column of the new matrix. 
The answer is independent of the choice of $m_0$ and $n_0$, due to the diagonal pattern of elements of $\Uhat$.
Together with the usual vector space structure on matrices,
the above product makes $\Uhat$ into a $\C$-algebra. 
The subset of elements $W\in\Uhat$ such that $X^{(m)}=0$ for all $m<0$
forms a subalgebra isomorphic to the algebra
$\U$ defined in (\ref{defU}). Let $\C((t))$ 
denote the field of formal Laurent series, the field of fractions
of $\C[[t]]$. The center $Z(\Uhat)$ consists of the elements 
in which all the submatrices $X^{(m)}$
are scalar. The map sending the Laurent 
series $\sum_n a_nt^n$ to the element 
$(X^{(m)})_{m\in\Z}$ such that $X^{(m)}=a_mI$ for all $m$, is an isomorphism
of $\C((t))$ with $Z(\Uhat)$. This extends to an
isomorphism of $\C((t))$-algebras 
\begin{equation*}
\widehat\gamma:\Uhat \to M_2(\C((t))),
\end{equation*}
which is the unique extension of the isomorphism (\ref{gamma}).

Now, the element $\tJ-I$ is invertible in $\Uhat$,
since $\tH=\widehat\gamma(\tJ-I)$ has determinant $-t$.
We define $A=(a_{i,j})_{i,j\in\N}$ by columns. For $n\in\N$,
we set the $n^\mathrm{th}$ column of $A$ equal to the $[0,1]$ column of
$(\tJ-1)^{-(n-1)}$.
Then $A$ satisfies (\ref{Acond}),  by construction. 
To compute the entries of $A$ we invert $\tH$ and its powers (\ref{tHpower})
to obtain
\begin{equation*}
(\tH)^{-1}=-t^{-1}\begin{bmatrix}t&-(1+g(t))\\
                                      -g(t)& 0\end{bmatrix}
\end{equation*}
and
\begin{equation*}
(\tH)^{-n}=(-1)^nt^{-n}\begin{bmatrix}h_n&-(1+g(t))h_{n-1}\\
                                      -g(t)h_{n-1}& th_{n-2}\end{bmatrix},
\qquad n\geq 2.
\end{equation*}
Then we read off the coefficients of the appropriate powers of $t$
in the first columns. The first two columns of $A$
are given by 
\begin{equation}\label{2cols}
\begin{gathered}
a_{i,1}=\delta_{i,1},\qquad i\in\N,\\
a_{1,2}=-1, \qquad a_{2,2}=1,\qquad a_{i,2}=0,\qquad i\geq 3.
\end{gathered}
\end{equation}
For $m\geq 3$ and $s\geq0$ we have
\begin{equation}\label{aentries}
\begin{aligned}
a_{2s+1,m}
&=\text{coefficient of $t^{-s}$ in $(-1)^{m-1}t^{-(m-1)}h_{m-1}$}\\
&=(-1)^{m-1}\binom{m-s-1}{s}\\
a_{2s+2,m}
&=\text{coefficient of $t^{-s}$ in $(-1)^mt^{-(m-1)}g(t)h_{m-2}$}\\
&=(-1)^m\sum_{k=1}^{\lfloor\frac{m}{2}\rfloor-s}b_k\binom{m-s-k-1}{s+k-1}.
\end{aligned}
\end{equation}
Let $Q=AJ_\infty=(q_{i,j})_{i,j\in\N}$. We check that $Q$
has the properties (\ref{Pinv}).
Since $\tJ A= A J_\infty$, it is clear that  $\tJ Q= Q J_\infty$.
We have
\begin{equation}\label{multj}
q_{i,j}=\begin{cases} a_{i,j}, \quad\text{if $j=1$,}\\
a_{i,j}+a_{i,j-1},\quad\text{if $j\geq 2$.}\end{cases}   
\end{equation}
Since $a_{1,m}=(-1)^{m-1}$, it follows that $q_{1,j}=\delta_{1,j}$.
Thus, $Q=P^{-1}$.

Finally, the entries of $Q$ are obtained by
applying (\ref{multj}) to (\ref{2cols}) and (\ref{aentries}).
Thus, $q_{i,1}=\delta_{i,1}$ and $q_{i,2}=\delta_{i,2}$,
for $i\in \N$. For $m\geq3$ and $s\geq 0$, we have
\begin{equation}\label{qentries}
\begin{aligned}
q_{2s+1,m}
&=(-1)^{m-1}\binom{m-s-2}{s-1}\\
q_{2s+2,m}
&=(-1)^m\sum_{k=1}^{\lfloor\frac{m}{2}\rfloor-s}b_k\binom{m-s-k-2}{s+k-2}.
\end{aligned}
\end{equation}

\begin{lemma}\label{APbounds}
For all $i$ and $j$ we have
$\abs{q_{i,j}}\leq 2^{3j}$ and $\abs{p_{i,j}}\leq 2^{2j}$.
\end{lemma}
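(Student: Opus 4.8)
The plan is to obtain both inequalities directly from the closed formulas (\ref{pentries}) and (\ref{qentries}), using only two elementary inputs. The first is a bound on the recursion coefficients: by (\ref{taylor}), for $k\ge2$ one has $\abs{b_k}=\frac{1}{k}\binom{2k-2}{k-1}\le\binom{2k-2}{k-1}\le 4^{k-1}$, while $b_0=b_1=1$, so $\abs{b_k}\le 2^{2k}$ for every $k\ge0$. The second is the crude estimate $\binom{a}{b}\le 2^{a}$, valid for all integers $a\ge0$, which together with the convention that $\binom{a}{b}=0$ whenever $a<0$ bounds every binomial coefficient appearing in (\ref{pentries}) and (\ref{qentries}) by an explicit power of $2$.

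With these in hand I would first dispose of the entries of $P$ and $Q=P^{-1}$ with a row or column index equal to $1$ or $2$: these are described explicitly in Section~\ref{transP}, where one finds $p_{1,j}=q_{1,j}=\delta_{1,j}$, $p_{i,1}=q_{i,1}=\delta_{i,1}$, $q_{i,2}=\delta_{i,2}$, and the only remaining ones, in the second row of $P$, equal to $0$, $\pm1$, or $b_k$ with $2k\le j$; in every case the required bound is immediate. For the remaining entries I would use (\ref{pentries}) and (\ref{qentries}) and split into the odd- and even-indexed cases. In the odd cases each entry is a single binomial coefficient: if $j=2s+1$ then $\abs{p_{i,j}}=\binom{s-1}{i-s-2}\le 2^{s-1}\le 2^{j}$, and if $i=2s+1$ then $\abs{q_{i,j}}=\binom{j-s-2}{s-1}\le 2^{j-s-2}\le 2^{j}$, both well below the required bounds. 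In the even cases each entry is a finite sum of terms $b_k\binom{\cdot}{\cdot}$. For $p$ with $j=2s+2$, each such term is at most $2^{2k}\cdot2^{s-k}=2^{s+k}$ and the summation index satisfies $0\le k\le s$, so the sum is at most $2^{2s+1}<2^{2j}$. For $q$ with $i=2s+2$ (where $s\ge0$), each term is at most $2^{2k}\cdot2^{j-s-k-2}=2^{j-s-2+k}$ and $1\le k\le\lfloor j/2\rfloor-s$, so this geometric progression sums to at most twice its largest term, hence to at most $2^{3j/2-2s-1}\le 2^{3j/2-1}<2^{3j}$. Collecting the cases proves the lemma.

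There is no genuine obstacle here: the substantive work, namely the derivation of the explicit formulas (\ref{pentries}) and (\ref{qentries}), has already been carried out, and what is left is bookkeeping. The only points requiring a little care are to substitute $j=2s+1$ or $j=2s+2$ (for $p$), respectively $i=2s+1$ or $i=2s+2$ (for $q$), correctly into each formula, and to note that in the even cases the summation index $k$ runs over a range of length $O(j)$, so the relevant geometric series is dominated by its largest term; with this done, the exponents produced stay comfortably below $2j$ for $p$ and $3j$ for $q$.
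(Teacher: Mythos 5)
Your proposal is correct and follows essentially the same route the paper takes: bound $b_k$ via the Catalan-type formula (\ref{taylor}), then plug into the explicit formulas (\ref{pentries}) and (\ref{qentries}) and use the crude estimate $\binom{a}{b}\le 2^a$ together with the paper's convention that $\binom{a}{b}=0$ for $a<0$. The paper's proof says no more than that the verification is elementary, and your write-up is exactly the bookkeeping that claim defers; the only cosmetic difference is that you use the slightly looser $\abs{b_k}\le 2^{2k}$ where the paper records $\abs{b_k}\le 2^{2k-2}$ for $k\ge1$, which changes nothing.
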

\begin{proof}The bound   
$\abs{b_k}\leq 2^{2k-2}$ for $k\geq 1$ follows from (\ref{taylor}).
It is then elementary to verify the bounds of the lemma
from the formulae (\ref{pentries}) and (\ref{qentries}).
\end{proof}

\subsection*{Proof of Theorem~\ref{JTmodule} }
We define
\begin{equation*}
\tau(Y)=P\tau_1(Y)P^{-1}, \qquad Y\in\SL(2,\Z).
\end{equation*}
From its construction, $\tau$ satisfies conditions (a), (b) and (c)
of Theorem~\ref{JTmodule}. It follows from Lemma~\ref{APbounds} 
that $\tau(S)=P\tS P^{-1}$ satisfies (d).
\qed

\section{Proof of Theorem~\ref{main}}\label{proofmain}
The matrix $Z^{-1}$ studied in Section~\ref{JCF} is the transition matrix from the
basis $\{e_n\}_{n\in\N}$ of $E$ to a new basis $\{e'_n\}_{n\in\N}$.
The linear transformation represented by the divisor matrix $D$ in the basis
$\{e_n\}_{n\in\N}$  is represented by $J$  in the basis $\{e'_n\}_{n\in\N}$.

Since $\N=\bigcup_{\text{$d$ odd}}\{ d2^{k-1}\mid k\in\N\}$
we have a decomposition

\begin{equation*}
E=\bigoplus_{\text{$d$ odd}}E(d),
\end{equation*}
Where $E(d)$ is the subspace of $E$ spanned by the elements $e'_{d2^{k-1}}$,
$k\in\N$.

We consider the isomorphisms
\begin{equation*}    
\phi_d:E\rightarrow E(d), \quad e_k\mapsto e'_{d2^{k-1}}.
\end{equation*}

For each odd number $d$ let 
$\A(d)$ be the subring of $\A$ consisting of matrices
whose entries $a_{i,j}$ are zero unless $i$ and $j$
both belong to the set $\{d2^{k-1}\mid k\in\N\}$.

The above isomorphisms induce isomorphisms
\begin{equation*}
\psi_d:\A\rightarrow \A(d).
\end{equation*}
and a homomorphism
\begin{equation*}
\psi:\A\rightarrow\prod_{\text{$d$ odd}}\A(d)\subseteq \A,\qquad \psi(A)=(\psi_d(A))_{\text{$d$ odd}}
\end{equation*}
We have 
\begin{equation*}
\psi(J_{\infty})=J.
\end{equation*}
Now for $A\in\A$, we have $\psi(A)_{i,j}=0$ unless there exists an odd number $d$
and $k$, $\ell\in\N$ with $(i,j)=(d2^{k-1},d2^{\ell-1})$, in which case
$\psi(A)_{i,j}=A_{k,\ell}$.

Let $\tau$ be the representation given by Theorem~\ref{JTmodule}
and let $\tau(S)=(s_{k,\ell})_{k,\ell\in\N}$.
By Theorem~\ref{JTmodule}(a), $s_{k,\ell}=0$ if $k>\ell+1$.
This means $\psi(\tau(S))_{i,j}=0$ if $i>2j$.
By Theorem~\ref{JTmodule}(d), there exists a constant
$C$ such that 
$\abs{s_{k,\ell}}\leq 2^{C\ell}$, for all $k$ and $\ell$, which
implies that $\abs{\psi(\tau(S))_{i,j}}\leq 2^C j^C$, for
all $i$ and $j$. We conclude that  $\psi(\tau(S))\in\DR_0$. Since 
$\psi(\tau(T))=J$, it follows that $\psi(\tau(\SL(2,\Z)))\subseteq\DR_0$.
Finally, the representation 
\begin{equation*}
\rho:\SL(2,\Z)\to\A^\times, \qquad Y\mapsto Z^{-1}\psi(\tau(Y))Z
\end{equation*}
satisfies all of the conditions of Theorem~\ref{main}.
The proof of  Theorem~\ref{main} is now complete. \qed

\begin{remarks}By a  closer examination of the proof we can
strengthen the conclusions of Theorem~\ref{main} in the following ways.
First, we have actually constructed the subgroup of $\DR_0^\times$
isomorphic to the direct product of copies $\SL(2,\Z)$ (indexed by the
odd numbers)  with the representation $\rho$ conjugate to the 
diagonal embedding.
Also part (d) can be sharpened to state that for $Y\in\SL(2,\Z)$
we have $\rho(Y)_{i,j}=0$ whenever $i>2j$.
\end{remarks}

\section{Extending representations  to $\GL(2,\Z)$} 
Let 
\begin{equation*}
W=\begin{pmatrix}0&1\\1&0\end{pmatrix}.
\end{equation*}
We have
\begin{equation}\label{wrels}
W^2=1,\quad WSW=S^{-1},\quad WRW=R^{-1}.
\end{equation}
The relations (\ref{wrels}) and (\ref{rels}) together
form a set of defining relations for  $\GL(2,\Z)=\langle\SL(2,\Z), W\rangle$.

In the following lemma the isomorphism $\gamma$
is defined in (\ref{gamma}) and the matrices $\gamma(\tS)$,
and $\gamma(\tR)$ are from  (\ref{Qtmats}). 

\begin{lemma}\label{wt}  Let
\begin{equation*}
W(t)=\frac{1}{\sqrt{t^2+4t+1}}\begin{bmatrix}-t&2g(t)+1\\
                                         2g(t)+1& t\end{bmatrix}.
\end{equation*}
Then $W(t)$ is, up to a sign, the  unique element of $\GL(2,\C[[t]])$
such that
\begin{enumerate}
\item[(i)]$W(t)^2=1$.
\item[(ii)]$W(t)\gamma(\tS)W(t)=\gamma(\tS)^{-1}$
\item[(iii)] $W(t)\gamma(\tR) W(t)=\gamma(\tR)^{-1}$
\item[(iv)]$W(0)=W$.
\end{enumerate}
\end{lemma}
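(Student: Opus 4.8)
The plan is to work entirely inside the matrix ring $M_2(\C[[t]])$ via the isomorphism $\gamma$, reducing the lemma to an elementary (if slightly involved) computation with $2\times 2$ matrices over the power series ring. First I would record the matrices we must conjugate: from (\ref{Qtmats}) we have $\gamma(\tS)=\left(\begin{smallmatrix}0&-1\\1&0\end{smallmatrix}\right)$ and $\gamma(\tR)=\left(\begin{smallmatrix}g(t)&1+t\\-1&-1-g(t)\end{smallmatrix}\right)$, and by Lemma~\ref{relations} their inverses are $\gamma(\tS)^{-1}=-\gamma(\tS)$ and, since $\tR^2+\tR+I=0$, $\gamma(\tR)^{-1}=-I-\gamma(\tR)$. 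So conditions (ii) and (iii) become the linear conditions that $W(t)$ anticommutes with $\gamma(\tS)$ (equivalently $W(t)\gamma(\tS)=-\gamma(\tS)W(t)$, using $W(t)^2=1$) and that $W(t)\gamma(\tR)W(t)=-I-\gamma(\tR)$.

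Next I would identify the form that any solution must take. A $2\times 2$ matrix $M=\left(\begin{smallmatrix}a&b\\c&d\end{smallmatrix}\right)$ anticommutes with $\left(\begin{smallmatrix}0&-1\\1&0\end{smallmatrix}\right)$ precisely when $d=-a$ and $c=b$, i.e. $M=\left(\begin{smallmatrix}a&b\\b&-a\end{smallmatrix}\right)$; such a matrix is an involution exactly when $a^2+b^2=1$. Imposing in addition that $M$ conjugates $\gamma(\tR)$ to $-I-\gamma(\tR)$ gives one further polynomial relation between $a$ and $b$ over $\C[[t]]$; combined with $a^2+b^2=1$ this should pin down the pair $(a,b)$ up to an overall sign. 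I would solve these relations and check that the solution is exactly $(a,b)=\dfrac{1}{\sqrt{t^2+4t+1}}(-t,\,2g(t)+1)$ — here one uses $g(t)^2+g(t)=t$ from (\ref{quadratic}) to verify $t^2+(2g(t)+1)^2=t^2+4g(t)^2+4g(t)+1=t^2+4t+1$, so that $a^2+b^2=1$ as required, and one verifies the $\gamma(\tR)$-condition by direct substitution, again reducing higher powers of $g(t)$ via (\ref{quadratic}). That $\sqrt{t^2+4t+1}$ makes sense in $\C[[t]]$ is clear since $t^2+4t+1$ has constant term $1$; fixing the square root with constant term $+1$ and noting $g(0)=0$ gives $W(0)=\left(\begin{smallmatrix}0&1\\1&0\end{smallmatrix}\right)=W$, which is condition (iv) and also selects the sign, giving uniqueness.

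For uniqueness up to sign: any other solution $W'(t)$ satisfies $W(t)W'(t)$ centralizing both $\gamma(\tS)$ and $\gamma(\tR)$; but $\gamma(\tS)$ and $\gamma(\tR)$ generate (the image of) $\SL(2,\Z)$, and I would argue that the centralizer of this pair in $M_2(\C((t)))$ consists only of scalars — e.g. because $\gamma(\tS)$ and $\gamma(\tR)$ do not commute, so they have no common eigenvector over $\C((t))$, hence span $M_2(\C((t)))$ together with $I$ and $\gamma(\tS)\gamma(\tR)$, forcing any common centralizing element to be central, i.e. scalar. A scalar involution in $\C[[t]]$ is $\pm 1$, so $W'(t)=\pm W(t)$.

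The main obstacle I anticipate is purely bookkeeping: carrying out the conjugation $W(t)\gamma(\tR)W(t)$ and simplifying the resulting entries, which involve products of $g(t)$ and the inverse of $t^2+4t+1$, down to $-I-\gamma(\tR)$ using only $g^2+g=t$. There is no conceptual difficulty — one should be careful that the scalar factor $1/(t^2+4t+1)$ from $W(t)^2$-type products cancels correctly against $a^2+b^2=1$ — but it is the step most likely to hide a sign error, so I would organize it by first verifying $a^2+b^2=1$ (which controls all the normalizing denominators) and only then checking the single remaining relation coming from (iii).
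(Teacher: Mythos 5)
Your proposal is correct and follows essentially the same route as the paper, whose proof is the single sentence ``straightforward, by matrix calculations in $M_2(\C[[t]])$, using the relation (\ref{quadratic})''; you are simply filling in the details of that computation. One small refinement worth keeping: your observation that (i) and (ii) together force $W(t)=\left(\begin{smallmatrix}a&b\\ b&-a\end{smallmatrix}\right)$ with $a^2+b^2=1$, and your centralizer argument for the ``up to a sign'' uniqueness (any two solutions differ by an element centralizing both $\gamma(\tS)$ and $\gamma(\tR)$, which must be a scalar involution since those two matrices do not commute and hence generate all of $M_2(\C((t)))$), organize the verification more transparently than a brute-force check would.
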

\begin{proof}
The proof is straightforward, by matrix calculations in 
$M_2(\C[[t]])$, using the relation (\ref{quadratic}).
\end{proof}

\begin{lemma}\label{wbound} For $i$,$j\in\{1, 2\}$, let
$w_{i,j}(t)=\sum_{n=0}^\infty r_nt^n$. Then
there exists a constant $C$, such that $\abs{r_n}\leq 2^{Cn}$.
\end{lemma}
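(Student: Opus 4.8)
The plan is to obtain the bound by examining the explicit formula for $W(t)$ in Lemma~\ref{wt} and showing that each of its three ingredients --- the scalar prefactor $(t^2+4t+1)^{-1/2}$, the entries of the matrix, and their product --- has power series coefficients of at most exponential growth. The key observation is that the set of formal power series $\sum_n c_nt^n$ with $\abs{c_n}\leq M\lambda^n$ for some constants $M$, $\lambda$ (depending on the series) is closed under addition and multiplication; this is the one-variable, unindexed analogue of the subring $\DR_0$ of Lemma~\ref{ivanish}, and is proved the same elementary way. So it suffices to treat the factors separately.

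First I would handle the matrix entries $-t$, $t$, and $2g(t)+1$. The first two are polynomials, hence trivially bounded. For $g(t)$ we already have the closed form $g(t)=\frac{-1+\sqrt{1+4t}}{2}$ and, more usefully, the exact coefficients from (\ref{taylor}): $b_m=\frac{(-1)^{m-1}}{m}\binom{2m-2}{m-1}$ for $m\geq 2$, whence $\abs{b_m}\leq\binom{2m-2}{m-1}\leq 2^{2m-2}$, exactly the estimate already recorded in the proof of Lemma~\ref{APbounds}. Thus $g(t)$, and therefore $2g(t)+1$, lies in the exponentially-bounded ring. Next I would treat the scalar factor $f(t):=(t^2+4t+1)^{-1/2}$. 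Writing $t^2+4t+1 = (1+at)(1+bt)$ where $a$, $b$ are the (real, irrational) reciprocals of the roots of $x^2+4x+1$, we have $f(t)=(1+at)^{-1/2}(1+bt)^{-1/2}$, and each factor $(1+at)^{-1/2}=\sum_n\binom{-1/2}{n}a^nt^n$ has $n$-th coefficient bounded in absolute value by $\abs{a}^n$ since $\abs{\binom{-1/2}{n}}\leq 1$; hence $f(t)$ is exponentially bounded, with $\lambda$ governed by $\max(\abs a,\abs b)=\abs a$ (the larger reciprocal root, which is $2+\sqrt3$). Alternatively one may simply note $t^2+4t+1$ has a nonzero constant term, so its inverse square root is a well-defined element of $\C[[t]]$ converging in a disc of positive radius, and any power series with a positive radius of convergence has exponentially bounded coefficients --- this is the cleanest route and avoids the explicit factorization.

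Finally I would multiply: $w_{i,j}(t)$ is, for each $i$, $j\in\{1,2\}$, the product of $f(t)$ with one of $\pm t$ or $2g(t)+1$, so by closure under multiplication $w_{i,j}(t)$ has coefficients $r_n$ with $\abs{r_n}\leq M\lambda^n$ for suitable $M$, $\lambda$; absorbing $M$ into the exponent (replace $\lambda$ by any $\lambda'>\lambda$ and note $M\lambda^n\leq (\lambda')^n$ for $n$ large, adjusting the finitely many small $n$ by enlarging the constant) gives $\abs{r_n}\leq 2^{Cn}$ for a single constant $C$ that works for all four entries. There is no real obstacle here: the only point requiring a word of justification is the closure of the exponentially-bounded ring under products, which is immediate since $\sum_{i+j=n}\abs{c_i}\abs{d_j}\leq (n+1)M_1M_2(\max(\lambda_1,\lambda_2))^n$ and the polynomial factor $n+1$ is absorbed by slightly increasing $\lambda$. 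If one wishes to be fully explicit rather than invoke radius of convergence, the mild bookkeeping in the partial-fractions/binomial-series computation for $f(t)$ is the most laborious step, but it is entirely routine.
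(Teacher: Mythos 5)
Your proof is correct and follows essentially the same route as the paper: observe that power series with exponentially bounded coefficients form a ring closed under multiplication, bound the coefficients of $g(t)$ via (\ref{taylor}), factor $t^2+4t+1$ into linear factors, and bound the coefficients of the resulting inverse square roots. The paper bounds the Taylor coefficients of $(t+a)^{-1/2}$ by differentiating, whereas you use the binomial series $\binom{-1/2}{n}$, but these are the same estimate in different clothing; the radius-of-convergence shortcut you mention at the end is also sound and arguably cleaner, though the paper prefers the explicit computation.
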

\begin{proof}
We consider those power series $\sum_{n=0}^\infty s_nt^n$ with real coefficients
for which there exists a constant $D$, which may depend on the series, such that $\abs{s_n}\leq 2^{Dn}$. We observe that the product of two such series has the same property.
Since $g(t)$ has this property and since $t^2+4t+1=(t+(2+\sqrt{3}))(t+(2-\sqrt{3}))$, we are reduced to proving the bound for the
Taylor series, centered at $0$, of $f(t)=(t+a)^{-\frac12}$, where $a>0$. 
We have
\begin{equation*}
f^{(n)}(t)=(-1)^n\frac{1\cdot3\cdot5\cdots(2n-1)}{2^n}(t+a)^{-\frac{(2n+1)}{2}},
\end{equation*}
Hence,
\begin{equation*}
\left|{\frac{f^{(n)}(0)}{n!}}\right|=\frac{1}{2^{2n}}\binom{2n}{n}a^{-\frac{(2n+1)}{2}}\leq
\frac{1}{\sqrt{a}}(\frac{1}{a})^n.
\end{equation*}
\end{proof}

\begin{proposition} The representation $\rho:\SL(2,\Z)\to\DR_0^\times$
can be extended to $\GL(2,\Z)$.
\end{proposition}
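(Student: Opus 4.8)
The plan is to mimic the proof of Theorem~\ref{main}, replacing the representation $\tau$ of $\SL(2,\Z)$ by an extension to $\GL(2,\Z)$ and then transporting it through the same machinery ($\psi$ and conjugation by $Z^{-1}$). First I would use Lemma~\ref{wt} to produce a matrix $\tW\in\U$ extending $\tau_1$ to $\GL(2,\Z)$: set $\tW=\widehat\gamma^{-1}(W(t))$, noting that $W(t)\in\GL(2,\C[[t]])$ so that $\tW$ actually lies in $\U$ (not merely $\Uhat$). By Lemma~\ref{wt}(i)--(iii) the elements $\tS$, $\tR$, $\tW$ satisfy the defining relations \eqref{wrels} together with \eqref{rels} (the latter holding by Lemma~\ref{relations}), so there is a representation $\tau_1'$ of $\GL(2,\Z)$ with $\tau_1'(S)=\tS$, $\tau_1'(T)=\tJ$ and $\tau_1'(W)=\tW$. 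Conjugating by the same unitriangular integer matrix $P$ of Section~\ref{transP} gives $\tau'(Y)=P\tau_1'(Y)P^{-1}$, an extension of $\tau$ to $\GL(2,\Z)$ with $\tau'(T)=J_\infty$ still.

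Next I would check the growth condition for $\tau'(W)=P\tW P^{-1}$. By Lemma~\ref{wbound}, the power series entries $w_{i,j}(t)$ of $W(t)$ have coefficients bounded by $2^{Cn}$, so $\tW=(W^{(n)})_{n\ge0}$ has entries bounded by $2^{Cn}$ down its diagonals; in particular $\tW$ satisfies a bound of the same shape as the bound on $\tS$ in Theorem~\ref{JTmodule}(d). Combining this with the explicit bounds $\abs{p_{i,j}}\le 2^{2j}$, $\abs{q_{i,j}}\le2^{3j}$ of Lemma~\ref{APbounds}, the triple product $P\tW P^{-1}$ has entries bounded by $2^{C'j}$ for some constant $C'$ (the row index in each factor is at most twice the column index because $P$ and $P^{-1}$ are upper triangular and $\tW\in\U$, so only finitely many terms contribute and each is controlled). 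Then applying $\psi$ exactly as in Section~\ref{proofmain}: since $\psi(A)_{i,j}=A_{k,\ell}$ when $(i,j)=(d2^{k-1},d2^{\ell-1})$ and is $0$ otherwise, the bound $\abs{\tau'(W)_{k,\ell}}\le2^{C'\ell}$ together with an appropriate vanishing condition (entries vanishing when the row index exceeds a fixed multiple of the column index, which follows since $\tW\in\U$ forces $W^{(n)}=0$ for $n<0$) yields $\psi(\tau'(W))\in\DR_0$ by Lemma~\ref{ivanish}. Hence $\psi(\tau'(\GL(2,\Z)))\subseteq\DR_0$, and $Y\mapsto Z^{-1}\psi(\tau'(Y))Z$ is a representation of $\GL(2,\Z)$ into $\DR_0^\times$ (using $Z,Z^{-1}\in\DR_0$ from Lemma~\ref{Zproperties}(f) and Theorem~\ref{Zinverse}) which restricts to $\rho$ on $\SL(2,\Z)$.

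The main obstacle I anticipate is not any single hard estimate but rather the bookkeeping needed to confirm that $\psi(\tau'(W))$ genuinely satisfies both hypotheses of Lemma~\ref{ivanish} — i.e.\ that there is a fixed $c$ with $\psi(\tau'(W))_{i,j}=0$ whenever $i>Cj^c$. For $\tau'(S)$ this came from the sharp statement in Theorem~\ref{JTmodule}(a) that $s_{k,\ell}=0$ for $k>\ell+1$; for $W$ there is no analogous filtration statement, so I would instead argue directly: since $\tW\in\U$ and $P$, $P^{-1}$ are upper unitriangular, $P\tW P^{-1}$ is upper triangular, giving $\tau'(W)_{k,\ell}=0$ for $k>\ell$, which after applying $\psi$ becomes vanishing for $i>j$ — more than enough for Lemma~\ref{ivanish}(i). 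Once that point is cleared, the rest is a routine repackaging of the proof of Theorem~\ref{main}, and the remark following Theorem~\ref{main} (that $\rho(Y)_{i,j}=0$ for $i>2j$) should extend verbatim to $\GL(2,\Z)$ as well.
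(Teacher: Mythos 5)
Your proposal is correct and follows essentially the same route as the paper: set $\tW=\gamma^{-1}(W(t))$, use Lemma~\ref{wt} to get the $\GL(2,\Z)$ relations, extend $\tau_1$, $\tau$, $\rho$ by conjugating with $P$ and $Z^{-1}$ after applying $\psi$, and invoke Lemmas~\ref{wbound} and \ref{APbounds} for the growth bounds (the paper states this last step without the bookkeeping you spell out). One small inaccuracy: $\tW$ is only \emph{block} upper triangular (its diagonal $2\times2$ blocks equal $W$, which has a nonzero $(2,1)$ entry), so $P\tW P^{-1}$ is not upper triangular; rather $(P\tW P^{-1})_{k,\ell}=0$ for $k>\ell+1$, which after applying $\psi$ gives vanishing for $i>2j$ --- exactly the condition used for $\tau(S)$ in Section~\ref{proofmain} and still more than enough for Lemma~\ref{ivanish}(i).
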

\begin{proof}
Set $\tW=\gamma^{-1}(W(t))$.
Then by Lemma~\ref{wt}, the group generated by $\tS$, $\tR$ and $\tW$ is
isomorphic to $\GL(2,\Z)$ and we can extend the representation
$\tau_1$ from $\SL(2,\Z)$ to $\GL(2,\Z)$ by setting $\tau_1(W)=\tW$.
Hence we can also extend the representations
$\tau$ and $\rho$ by setting $\tau(W)=P\tau_1(W)P^{-1}$
and $\rho(W)=Z^{-1}\psi(\tau(W))Z$.
 Then Lemma~\ref{wbound} and  Lemma~\ref{APbounds} imply that
$\rho(\GL(2,\Z))\subseteq\DR_0$.
\end{proof}

\begin{remark}
Note that $\tau_1(W)$, $\tau(W)$ and $\rho(W)$ are 
not integral matrices.
\end{remark}

\section{Uniqueness of $M_\infty$}
Let $S$ and $T$ be the generators of $G=\SL(2,\Z)$ 
as given in (\ref{gens}). 
Let $V$ denote the standard $2$-dimensional $\CG$-module.

We shall call a $\CG$-module \emph{$T$-indecomposable module} 
if $T$ acts indecomposably and unipotently
on every $T$-invariant subspace. One example is
the $\CG$-module, which we shall denote by $M_\infty$,
defined by the representation $\tau$ of Theorem~\ref{JTmodule}.

\begin{theorem}\label{uniqueness}  $M_\infty$ is the unique
$T$-indecomposable $\CG$-module 
which has an ascending filtration $\{M_n\}_{n\in\N}$
in which every quotient $M_n/M_{n-1}$ is isomorphic to $V$.
\end{theorem}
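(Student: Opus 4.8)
I would prove uniqueness by working with the Laurent-series ring $\C((t))$ and exploiting the description of $T$-actions by power series. The key observation is that a $\CG$-module $M$ with a filtration $\{M_n\}$ whose quotients are all $\cong V$ can be presented, after choosing compatible bases, by matrices in an algebra of the type $\Uhat$ (or rather its upper-triangular subalgebra analogous to $\U$), exactly as in Section~\ref{constr}. So the whole module structure is encoded by the pair $(\widehat\gamma(\rho(S)),\widehat\gamma(\rho(T)))\in\GL_2(\C((t)))^2$ subject to the relations \eqref{rels}; here $t$ plays the role of the ``shift'' operator realizing the filtration. The plan is: (1) translate the hypotheses into a statement about such a pair of $2\times 2$ matrices over $\C((t))$ or $\C[[t]]$; (2) show the $T$-indecomposability condition forces $\widehat\gamma(\rho(T))$ to be conjugate (over $\C((t))$, by an invertible matrix respecting the filtration) to $\gamma(\tJ)$; (3) conclude the pair is conjugate to $(\gamma(\tS),\gamma(\tJ))$, hence $M\cong M_\infty$.

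First I would make precise the reduction to matrices over $\C[[t]]$. Given the filtration by copies of $V$, pick for each $n$ a basis of $M_n/M_{n-1}$ mapping to a fixed basis of $V$, and lift to get a basis $\{e_1,e_2,\dots\}$ of $M$ (in the countable-dimensional case) in which $T$ and $S$ act by block-upper-triangular matrices whose diagonal $2\times 2$ blocks are the action of $T$, $S$ on $V$. The crucial extra constraint is that the conjugation action of $T$ on $\mathrm{End}(V^{\oplus k})$ — or the structure of $M_n$ as an iterated extension — is governed only by the $\mathrm{Ext}^1_{\CG}(V,V)$-data between consecutive layers and the higher Massey-type products. Because $\SL(2,\Z)$ has the presentation \eqref{rels} with $S^2$ central of order $2$ and $(ST)^3=S^2$, the representation is determined by where $S$ and $R:=-ST$ (or $ST$) go, subject to $S^2=-I$-type and $R^3=I$-type relations at the level of the $t$-adic matrix algebra. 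I would package the extension data as a single matrix $U(t)=\widehat\gamma(\text{action of }T)\in\GL_2(\C[[t]])$ with $U(0)=\left(\begin{smallmatrix}1&1\\0&1\end{smallmatrix}\right)$, and similarly for $S$; the module is then literally an object of $\U$ up to the isomorphism $\gamma$.

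Next, the heart of the argument: among all such pairs, $T$-indecomposability pins down the isomorphism class. Two modules in $\U$ are isomorphic iff the corresponding pairs in $\GL_2(\C[[t]])$ are simultaneously conjugate by an element of $\GL_2(\C[[t]])$ reducing to the identity mod $t$ (this ``identity mod $t$'' is what preserves the filtration and its trivialization on the quotients). So I must show: if $(\sigma(t),\upsilon(t))$ satisfies the $\SL(2,\Z)$-relations, $\upsilon(0)=\left(\begin{smallmatrix}1&1\\0&1\end{smallmatrix}\right)$, $\sigma(0)=S$, and the associated $\U$-module is $T$-indecomposable, then $(\sigma,\upsilon)$ is $\GL_2(\C[[t]])$-conjugate (mod-$t$-unipotently) to $(\gamma(\tS),\gamma(\tJ))$. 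The relation $R^3=\pm I$ with $R=-\sigma\upsilon$ severely constrains $\upsilon$: writing everything over $\C((t))$, the eigenvalues of $R$ must be primitive cube roots of unity, so $\mathrm{tr}\,\widehat\gamma(\tR\text{-analogue})=-1$ and $\det=1$; combined with $\upsilon(0)$ unipotent this forces, after a mod-$t$ conjugation, $\upsilon$ into the normal form $\gamma(\tJ)$ — indeed the quadratic \eqref{quadratic} defining $g(t)$ emerges as the unique solution (up to the sign choice fixed by ``no constant term'') of the consistency equation, which is exactly how $\tJ$ was built. The $T$-indecomposability hypothesis is what rules out the reducible alternative where $\upsilon$ is conjugate to a direct sum, i.e. it forces the off-diagonal power series to be a \emph{unit} times $t$ at the right spot rather than vanishing.

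**The main obstacle.** The delicate point — and I expect this to be where most of the work lies — is step (2): proving that $T$-indecomposability (acting indecomposably and unipotently on \emph{every} $T$-invariant subspace, not just on $M$) forces the conjugation into the single normal form $\gamma(\tJ)$ rather than merely into ``some'' indecomposable shape. A priori there could be a whole family of non-isomorphic $T$-indecomposable modules with $V$-filtration, parametrized by the choices of higher extension data $b_2, b_3,\dots$; the content of the theorem is that the $\SL(2,\Z)$-relations rigidify this family to a point. Concretely I would argue that the relation $(ST)^3=S^2$, read off coefficient-by-coefficient in $t$, yields precisely the recursion \eqref{bn} (equivalently \eqref{quadratic}) for the off-diagonal series, with the ``unipotent on every $T$-invariant subspace'' clause forcing $b_0=b_1=1$ (nonzero leading terms) so that no degenerate branch of the recursion is taken; uniqueness of the power-series solution with the prescribed low-order terms then gives uniqueness of the module. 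I would also need the companion fact — provable by the same diagonalization of $\tH$ used for \eqref{tHpower} — that any two bases realizing the filtration differ by a mod-$t$-unipotent element of $\U$, so that the normal form is canonical, not just existent.
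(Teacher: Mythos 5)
Your reduction to the Toeplitz algebra $\U$ is where the argument breaks down. You correctly observe that choosing compatible bases for the filtration quotients yields block-upper-triangular matrices for the actions of $S$ and $T$, with the standard $2\times 2$ blocks on the diagonal. But $\U$ is not the algebra of all such block-upper-triangular matrices --- it is the much smaller algebra of block-\emph{Toeplitz} matrices, with the $X^{(n)}$ ``repeated down the diagonals.'' An arbitrary $\CG$-module with $V$-filtration gives block-upper-triangular matrices that need not be constant along diagonals, and so there is no a priori encoding of the module by a single pair $(\sigma(t),\upsilon(t))\in\GL_2(\C[[t]])^2$. The Toeplitz structure was a \emph{choice} baked into the construction of $\tau_1$ in Section~\ref{constr}, not a consequence of the hypotheses of the theorem. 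Your closing remark that ``any two bases realizing the filtration differ by a mod-$t$-unipotent element of $\U$'' is exactly the assertion that needs proof; two filtration-adapted bases differ in general only by a block-upper-triangular unipotent matrix, and showing that one can always normalize to the Toeplitz form is, in disguise, most of the content of the uniqueness theorem. Similarly, the step where you say the higher extension data is ``governed only by $\Ext^1$-data and higher Massey-type products'' names precisely the obstruction you would have to kill, but no argument is given for killing it.

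By contrast, the paper's proof sidesteps the matrix packaging entirely and works homologically, layer by layer: Lemma~\ref{ext} computes $\Ext^1_{\CG}(V,V)\cong\C$, Lemma~\ref{M(n)} uses the long exact sequence to show by induction that for each finite $n$ there is a unique $T$-indecomposable length-$n$ module $M(n)$ with composition factors $V$ (this is where the ``indecomposable and unipotent on every $T$-invariant subspace'' clause is used, since it forces the submodule lattice to be a chain), Lemma~\ref{extend} shows isomorphisms of maximal submodules extend, and the theorem follows by recursively extending and taking a union. If you want to rescue your approach, you would essentially need to first prove the finite-level uniqueness of Lemma~\ref{M(n)} (which would then justify that a filtration-adapted basis can be chosen to make the matrices Toeplitz), at which point you have already reproduced the heart of the paper's argument and the power-series machinery becomes a postscript rather than the engine of the proof.
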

Some lemmas are needed for the proof of Theorem~\ref{uniqueness}.
\begin{lemma}\label{ext}$\Ext^1_{\CG}(V,V)\cong\C$.
\end{lemma}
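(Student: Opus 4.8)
The plan is to compute $\Ext^1_{\CG}(V,V)$ by exploiting the presentation of $G=\SL(2,\Z)$ in terms of the torsion generators rather than $S$ and $T$. Recall that $G$ is the amalgamated free product $\Z/4\Z *_{\Z/2\Z}\Z/6\Z$; concretely $G=\langle S, R\mid S^4=1,\ R^3=1,\ S^2=R^{-1}S^{-1}RS^{-1}\rangle$ or, more usefully, $G$ has a normal subgroup of finite index which is free, and in any case $\Ext^1_{\CG}(V,V)=H^1(G,\End_\C(V))$. Since $V$ is the standard module, $\End_\C(V)\cong M_2(\C)$ decomposes as a $\CG$-module into the trivial module $\C$ (scalars) and the trace-zero part $\mathfrak{sl}_2$, which is the adjoint representation. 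Thus $\Ext^1_{\CG}(V,V)\cong H^1(G,\C)\oplus H^1(G,\mathfrak{sl}_2)$.

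First I would dispose of the trivial summand: $H^1(G,\C)=\Hom(G,\C)=\Hom(G^{\mathrm{ab}},\C)$, and since $G^{\mathrm{ab}}\cong\Z/12\Z$ is finite, this vanishes. So it remains to show $H^1(G,\mathfrak{sl}_2)\cong\C$. For this I would use the Mayer--Vietoris sequence for the amalgam $G=C_4*_{C_2}C_6$ (equivalently, the long exact sequence in group cohomology coming from the action of $G$ on its Bass--Serre tree), which reads
\begin{equation*}
0\to H^0(G,\mathfrak{sl}_2)\to H^0(C_4,\mathfrak{sl}_2)\oplus H^0(C_6,\mathfrak{sl}_2)\to H^0(C_2,\mathfrak{sl}_2)\to H^1(G,\mathfrak{sl}_2)\to\cdots.
\end{equation*}
Since $C_4$, $C_6$ and $C_2$ are finite, all their higher cohomology with $\C$-coefficients vanishes, so the sequence collapses to the statement that $H^1(G,\mathfrak{sl}_2)$ is the cokernel of the restriction map $(\mathfrak{sl}_2)^{C_4}\oplus(\mathfrak{sl}_2)^{C_6}\to(\mathfrak{sl}_2)^{C_2}$. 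Now $C_2=\langle S^2\rangle=\langle -I\rangle$ acts trivially on $\mathfrak{sl}_2$ (it is $\pm I$ conjugation), so $(\mathfrak{sl}_2)^{C_2}=\mathfrak{sl}_2$ is $3$-dimensional; the fixed space of $C_4=\langle S\rangle$ on $\mathfrak{sl}_2$ is the line spanned by $S$ itself (the element of order $4$ in $\SL_2$ has a $1$-dimensional centralizer in $\mathfrak{sl}_2$), and likewise $(\mathfrak{sl}_2)^{C_6}$ is $1$-dimensional, spanned by $R$ (or $ST$). These two lines are distinct, so the image of the restriction map is $2$-dimensional, and the cokernel is $3-2=1$-dimensional. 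This gives $H^1(G,\mathfrak{sl}_2)\cong\C$, hence $\Ext^1_{\CG}(V,V)\cong\C$.

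Alternatively, and perhaps more in keeping with the elementary style of the paper, one can argue directly with the defining relations: a class in $\Ext^1_{\CG}(V,V)$ is represented by a cocycle, i.e.\ a pair of matrices $(\sigma,\tau)\in M_2(\C)^2$ assigned to $(S,T)$ satisfying the linearized relations $S^4=(ST)^6=1$, $S^2=(ST)^3$ modulo coboundaries, which amounts to solving an explicit linear system; the solution space modulo coboundaries is then checked to be one-dimensional. I would carry out whichever version is shorter in the write-up. The main obstacle I anticipate is purely bookkeeping: correctly identifying the fixed subspaces $(\mathfrak{sl}_2)^{C_4}$ and $(\mathfrak{sl}_2)^{C_6}$ and verifying they are distinct lines (equivalently, that $S$ and $ST$ do not commute, which is clear), and making sure the $C_2$-action is genuinely trivial on $\End_\C(V)$ so that the $H^0$ term contributing to the cokernel has the full dimension $3$. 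Once these fixed-point dimensions are pinned down the dimension count is immediate.
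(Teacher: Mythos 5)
Your proof is correct, but it takes a genuinely different route from the paper. You compute $\Ext^1_{\CG}(V,V)$ as $H^1(G,\End_\C(V))$, split off the trivial summand $\C I$ of $\End_\C(V)$ (killed since $G^{\mathrm{ab}}\cong\Z/12$ is finite), and then use the Mayer--Vietoris sequence for the amalgam $G\cong C_4*_{C_2}C_6$ to reduce $H^1(G,\mathfrak{sl}_2)$ to the cokernel of $(\mathfrak{sl}_2)^{C_4}\oplus(\mathfrak{sl}_2)^{C_6}\to(\mathfrak{sl}_2)^{C_2}$; identifying $(\mathfrak{sl}_2)^{C_4}$ and $(\mathfrak{sl}_2)^{C_6}$ as the distinct lines spanned by the trace-zero parts of $S$ and $ST$, and $(\mathfrak{sl}_2)^{C_2}$ as all of $\mathfrak{sl}_2$ (since $S^2=-I$ is central), gives cokernel of dimension $3-2=1$. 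The dimension count is right and the argument is sound. (The parenthetical presentation you quote with $R^3=1$ is actually the presentation for $\PSL(2,\Z)$, not $\SL(2,\Z)$, but you never use it; the amalgam $C_4*_{C_2}C_6$ you actually work with is correct.) The paper instead argues directly with an extension $0\to V\to M\to V\to 0$: since $\langle ST\rangle$ has order $6$ it acts semisimply, so one can pick a basis in which $\mu(ST)$ is block diagonal, and then the constraint $\mu(S)^2=-I$ forces the off-diagonal block $z(S)$ to satisfy $z(S)S+Sz(S)=0$, a two-parameter family, which a further change of basis reduces to one parameter. Your Mayer--Vietoris argument is cleaner conceptually and generalizes, but the paper's hands-on version has the advantage of producing an explicit matrix model of the nonsplit extension, and in particular ends by observing that when the remaining parameter $a$ is nonzero, $\mu(T)$ acts indecomposably; this $T$-indecomposability of the unique nonsplit extension is exactly what the subsequent Lemmas~\ref{M(n)} and \ref{extend} need, so the paper's route does double duty.
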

\begin{proof} Suppose we have a module extension $M$ of $V$ by itself
and let $\mu:G\to\GL(M)$ denote the representation.
Since the cyclic group $\langle ST\rangle$ of order $6$ acts semisimply, we may 
choose a basis of $M$ such that
\begin{equation*}
\mu(ST)=\begin{bmatrix}ST&0\\
          0 &ST\end{bmatrix} 
\quad\text{and}\quad
\mu(S)=\begin{bmatrix}S& z(S)\\
          0&S\end{bmatrix},
\end{equation*}
for some $2\times2$ matrix $z(S)$.
Since $\mu(S)^2=-I$, we have
   $z(S)S+Sz(S)=0$, so
\begin{equation*}
z(S)=\begin{bmatrix}a&b\\
                    b&-a\end{bmatrix}
\end{equation*}
for some $a$, $b\in\C$.

By a further change of basis  we can  reduce to
\begin{equation*}
\mu(S)=\begin{bmatrix}0&-1&a&0\\
                      1&0&0&-a\\
                      0&0&0&-1\\
                      0&0&1&0\end{bmatrix},
\end{equation*}
while leaving $\mu(ST)$ unchanged. Thus, $\dim \Ext^1_{\CG}(V,V)\leq 1$.
Lastly, if $a\neq0$ then  $\mu(T)=-\mu(S)\mu(ST)$ acts indecomposably.
\end{proof}

\begin{lemma}\label{M(n)}For each natural number $n$ there is, 
up to isomorphism, a unique $T$-indecomposable $\CG$-module $M(n)$ 
of length $n$ and having all composition factors isomorphic to $V$ .
\end{lemma}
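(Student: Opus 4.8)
The plan is to prove existence and uniqueness separately, using induction on the length $n$ and the computation $\Ext^1_{\CG}(V,V)\cong\C$ from Lemma~\ref{ext}. For existence, I would take $M(n)$ to be the unique (up to isomorphism) submodule $M_n$ appearing in the filtration of $M_\infty$; by Theorem~\ref{JTmodule}(a) this is a $\CG$-module of length $n$ with all composition factors isomorphic to $V$, and it is $T$-indecomposable because $\tau(T)=J_\infty$ acts as a single infinite Jordan block, so its restriction to any $T$-invariant subspace of $E$ (in particular to $E_n$, and to any $T$-invariant subspace thereof) is unipotent and indecomposable. Alternatively, one can build $M(n)$ directly by truncating the matrices $\tJ$ and $\tS$ to their top-left $2n\times 2n$ corner and checking the relations \eqref{rels} still hold; but quoting $M_\infty$ is cleaner.

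For uniqueness I argue by induction on $n$, the case $n=1$ being the simplicity of $V$. Suppose $M$ is $T$-indecomposable of length $n$ with all factors $\cong V$. First I claim $M$ has a unique submodule $N$ of length $n-1$: since $T$ acts indecomposably and unipotently on $M$, the subspace $\Ker((\mu(T)-I)^{n-1})$ has codimension exactly the size of one Jordan block quotient, hence is a $T$-invariant subspace; I must check it is actually a $\CG$-submodule and that it is the unique maximal proper submodule. The key point is that $M/\rad M$ and $\soc M$ are each isomorphic to $V$: indeed if $M$ had two distinct maximal submodules $N_1\neq N_2$ then $M/(N_1\cap N_2)\cong V\oplus V$, on which $T$ would act semisimply — but $T$ acts unipotently with a single Jordan block, forcing the image of $T-I$ to be nonzero on this quotient, a contradiction. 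So $M$ has a unique maximal submodule $N$, which by $T$-indecomposability (restriction of the $T$-action) is again $T$-indecomposable of length $n-1$, hence $N\cong M(n-1)$ by induction.

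It then remains to see that the extension $0\to M(n-1)\to M\to V\to 0$ is determined up to isomorphism. The extensions of $V$ by $M(n-1)$ form the space $\Ext^1_{\CG}(V,M(n-1))$; pushing out along the surjection $M(n-1)\twoheadrightarrow M(n-1)/\rad M(n-1)\cong V$ and using that $\rad M(n-1)$ has no quotient $\cong V$ other than through this surjection, one reduces to $\Ext^1_{\CG}(V,V)\cong\C$, which is one-dimensional. The split extension is excluded because it would give $M\cong M(n-1)\oplus V$, on which $T$ does not act indecomposably; so $M$ corresponds to a nonzero class, and all nonzero classes differ by a scalar, i.e.\ by an automorphism of $V$, hence give isomorphic modules. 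Therefore $M\cong M(n)$.

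The main obstacle I anticipate is the $\Ext$ bookkeeping in the last step: one must be careful that pushing the extension class of $M$ in $\Ext^1_{\CG}(V,M(n-1))$ forward to $\Ext^1_{\CG}(V,V)$ loses no information, i.e.\ that the connecting map $\Ext^1_{\CG}(V,M(n-1))\to\Ext^1_{\CG}(V,V)$ induced by $M(n-1)\twoheadrightarrow V$ is injective on the relevant classes. This is where the $T$-indecomposability hypothesis does the real work — it is exactly what rules out the extensions that would become trivial after pushing forward — and making that implication precise, rather than the dimension count itself, is the delicate part.
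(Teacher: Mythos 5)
Your strategy --- existence from $M_\infty$, uniqueness by induction on the length with the one-dimensionality of $\Ext^1_{\CG}(V,V)$ (Lemma~\ref{ext}) as the base --- is the same as the paper's.

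Two points need repair. First, a small slip: $T$ does \emph{not} act semisimply on $V\oplus V$; it acts unipotently, with two Jordan blocks of size $2$. What you actually need, and what is true, is that this is not a single Jordan block, so $V\oplus V$ cannot be a $T$-invariant quotient of a $T$-indecomposable module; with that restatement your argument for the uniqueness of the maximal submodule is fine. Second, the gap you flag at the end is genuine, and the pushout map by itself does not close it. The paper closes it by applying $\Hom_{\CG}(V,-)$ to $0\to M(k-1)\to M(k)\to V\to 0$ and reading the long exact sequence
\[
\cdots\to\Hom_{\CG}(V,V)\xrightarrow{\ \delta\ }\Ext^1_{\CG}(V,M(k-1))\to\Ext^1_{\CG}(V,M(k))\to\Ext^1_{\CG}(V,V).
\]
Here $\Hom_{\CG}(V,V)\cong\C$ by Schur's lemma, and $\Ext^1_{\CG}(V,M(k-1))\cong\C$ by the inductive hypothesis; the connecting map $\delta$ carries $\mathrm{id}_V$ to the class of the nonsplit extension $M(k)$, so $\delta$ is nonzero and hence an isomorphism of one-dimensional spaces. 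By exactness the next map vanishes, so $\Ext^1_{\CG}(V,M(k))$ injects into $\Ext^1_{\CG}(V,V)\cong\C$, and it is nonzero because $M(k+1)$ exists as a nonsplit extension of $V$ by $M(k)$. That yields $\Ext^1_{\CG}(V,M(k))\cong\C$ together with exactly the injectivity you were worried about, after which your final paragraph (nonzero classes are unique up to scalar, and scalar multiples of an extension class have isomorphic total spaces) completes the uniqueness argument as you intended.
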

\begin{proof} We already have existence of such a module, as a submodule
of $M_\infty$.
We prove by induction that $\Ext^1_{\CG}(V,M(k))\cong\mathbf Q$.
The case $k=1$ is Lemma~\ref{ext}.
We apply $\Hom_{\CG}(V,-)$ to the short exact sequence
\begin{equation*}
0\to M(k-1)\to M(k)\to V\to 0.
\end{equation*}
The long exact sequence of cohomology is:
$$
\begin{aligned}
0\to &\Hom_{\CG}(V, M(k-1))\to \Hom_{\CG}(V, M(k))\to\Hom_{\CG}(V, V)\\
\to &\Ext^1_{\CG}(V, M(k-1))\to \Ext^1_{\CG}(V, M(k))\to\Ext^1_{\CG}(V, V)\to
\end{aligned}
$$
The desired conclusion follows by induction and Lemma~\ref{ext}.
\end{proof}

\begin{lemma}\label{extend} Let $M$, $M'$ be isomorphic to $M(n)$ and let
$N$, $N'$ be their maximal $\CG$-submodules. Then any $\CG$-isomorphism
from $N$ to $N'$ can be extended to an isomorphism from $M$ to $M'$.
\end{lemma}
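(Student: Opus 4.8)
The plan is to reduce the extension problem to a statement about lifting along the canonical surjection, and then to use the $\Ext$ computation of Lemma~\ref{M(n)} to carry out that lifting. First I would fix isomorphisms $M\cong M(n)\cong M'$ and identify $N$, $N'$ with the maximal submodule $M(n-1)\subset M(n)$ via these, so that the given $\CG$-isomorphism $f\colon N\to N'$ becomes an automorphism of $M(n-1)$. Thus it suffices to show that every $\CG$-automorphism of $M(n-1)$ extends to a $\CG$-automorphism of $M(n)$. (For $n=1$ there is nothing to prove, so assume $n\geq 2$.) Since $M(n)/M(n-1)\cong V$, an extension of $f$ to a $\CG$-homomorphism $\tilde f\colon M(n)\to M(n)$, if it exists, is automatically surjective (its image contains $M(n-1)$ and surjects onto $V$) and hence an isomorphism by a dimension count, so the only issue is existence of a $\CG$-linear lift.

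The key step is a diagram-chase with the short exact sequence $0\to M(n-1)\to M(n)\xrightarrow{\pi} V\to 0$. Applying $\Hom_{\CG}(M(n),-)$ to it gives a long exact sequence whose relevant segment reads
\begin{equation*}
\Hom_{\CG}(M(n),M(n-1))\xrightarrow{\pi_*}\Hom_{\CG}(M(n),M(n))\xrightarrow{\bar\pi_*}\Hom_{\CG}(M(n),V)\xrightarrow{\partial}\Ext^1_{\CG}(M(n),M(n-1)).
\end{equation*}
Given $f\in\Aut_{\CG}(M(n-1))$, the composite $\iota\circ f\circ(\text{restriction})$, where $\iota\colon M(n-1)\hookrightarrow M(n)$, is an element of $\Hom_{\CG}(M(n),M(n))$ precisely when it lifts the appropriate element of $\Hom_{\CG}(M(n),V)$; equivalently, $f$ extends iff a certain obstruction class in $\Ext^1_{\CG}(M(n),M(n-1))$ vanishes. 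I would instead argue more directly: choosing any vector-space splitting $M(n)=M(n-1)\oplus L$ with $\dim_\C L=2$ and $\pi|_L$ an isomorphism onto $V$, a $\CG$-linear extension $\tilde f$ of $f$ is determined by its value on $L$, and $\CG$-linearity forces $\tilde f|_L$ to differ from $f\oplus(\text{id on }V\text{ transported to }L)$ by a cocycle valued in $M(n-1)$; the obstruction to solving for it lives in $\Ext^1_{\CG}(V,M(n-1))$, which by Lemma~\ref{M(n)} is one-dimensional. One then observes that the same obstruction controls the extension class of $M(n)$ itself: because $M(n)$ is a nonsplit extension of $V$ by $M(n-1)$ — indeed $T$ acts indecomposably — the corresponding class in $\Ext^1_{\CG}(V,M(n-1))$ is nonzero, and scaling $L$ by a suitable scalar (i.e. composing with an automorphism of $V$, which lifts trivially) kills the obstruction for $f$. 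Hence the lift exists.

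The main obstacle, and the step that needs the most care, is verifying that the obstruction class attached to extending $f$ lies in the \emph{same} line inside $\Ext^1_{\CG}(V,M(n-1))\cong\C$ as the extension class of $M(n)$, so that it can be annihilated by an automorphism of $V$. Concretely this amounts to the following: write the $2$-cocycle describing $M(n)$ as $g\in Z^1(G,\Hom_\C(V,M(n-1)))$ with $[g]\neq 0$; extending an automorphism $f$ of $M(n-1)$ changes the cocycle to $f_*g$ up to a coboundary, and since $f_*$ acts on the one-dimensional space $\Ext^1_{\CG}(V,M(n-1))$ by a scalar $c(f)\in\C^\times$, we have $[f_*g]=c(f)[g]$, which is a nonzero multiple of $[g]$ and hence realizable after rescaling the chosen lift of $V$. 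I would make this precise by noting that $\Ext^1_{\CG}(V,M(n-1))$ is a module over $\End_{\CG}(M(n-1))$, that this endomorphism ring acts through its quotient by the radical (which is $\C$, as $M(n-1)$ is indecomposable with all factors $V$ and $\End_{\CG}(V)=\C$), and that an automorphism therefore acts invertibly; combined with the naturality of the connecting map $\partial$ this yields the claim. Once the obstruction is shown to be a scalar multiple of the (nonzero) class of $M(n)$, the extension is immediate.
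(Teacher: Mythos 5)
Your argument is correct in outline, but it takes a genuinely different route from the paper's. The paper proves the lemma by a self-contained induction: given $\psi\colon N\to N'$, pick any isomorphism $\phi\colon M\to M'$ and rescale $\phi$ (using that $\End_{\CG}(N')$ is local with residue field $\C$) so that $\alpha=\phi\vert_N-\psi$ is singular; then $\alpha$ factors through an isomorphism $N/K\to N'(k)$ with $k<n-1$, which by the inductive hypothesis extends to $\overline\beta\colon M/K\to N'(k+1)$, and $\phi-\beta$ is the desired extension of $\psi$. You instead reduce to extending an automorphism $f$ of $M(n-1)$ across the nonsplit sequence $0\to M(n-1)\to M(n)\to V\to 0$ and run an obstruction-theoretic argument: a lift of $f$ over a prescribed $\bar g\in\Aut(V)$ exists iff $f_*[e]=\bar g^*[e]$ in the one-dimensional space $\Ext^1_{\CG}(V,M(n-1))$; since $f_*$ acts by a nonzero scalar (because $\End_{\CG}(M(n-1))$ acts through its residue field) and $\bar g^*$ acts by the scalar $\bar g$ itself (Schur, $\End_{\CG}(V)=\C$), a suitable choice of $\bar g$ matches the two classes. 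Both proofs ultimately hinge on the same one-dimensionality from Lemma~\ref{M(n)}, but the paper exploits it through endomorphism rings and avoids cocycles entirely, whereas your version makes the $\Ext^1$-class bookkeeping explicit; the paper's route is more elementary and shorter, while yours isolates the conceptual reason the extension exists (both the extension class of $M(n)$ and the obstruction live in the same line, so an automorphism of $V$ can absorb the discrepancy).

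One small inaccuracy worth fixing: you assert that any $\CG$-linear extension $\tilde f$ of $f$ is ``automatically surjective (its image contains $M(n-1)$ and surjects onto $V$).'' The induced map on $V=M(n)/M(n-1)$ need not be nonzero for an arbitrary extension; for example the nilpotent generator of $\End_{\CG}(M(n))\cong\C[x]/(x^n)$ restricts to zero on $M(n-1)$ and induces zero on $V$. The correct quick justification is injectivity: if $\tilde f$ had nonzero kernel it would contain the socle $M(1)\subseteq M(n-1)$, on which $\tilde f=f$ is injective, a contradiction; surjectivity then follows by dimension count. Alternatively, in your actual construction the induced map on $V$ is the nonzero scalar $c(f)$, so surjectivity is built in — but the blanket claim as stated is not quite right.
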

\begin{proof}
We argue by induction on $n$, the case $n=1$
being trivial. We assume $n>1$. By Lemma~\ref{M(n)},
$N'$ has, for each $k\leq n-1$, a unique submodule  $N'(k)\cong M(k)$ 
of length $k$ and these are all the submodules of $N'$.
Let $\psi:N\to N'$ be a given isomorphism. Choose any isomorphism
$\phi:M\to M'$. Replacing $\phi$ by a scalar multiple, we can assume
that $\alpha:=\phi\vert_N-\psi\in\Hom_{\CG}(N,N')$ is not an isomorphism,
so it has a nonzero kernel $K$. Hence $\alpha$ induces
an isomorphism $N/K\to N'(k)$ for some $k<n-1$. By induction, this
isomorphism may be extended to an isomorphism $\overline\beta:M/K\to N'(k+1)$.
The induced map $\beta:M\to N'(k+1)$ is an extension of $\alpha$.
Thus, $\psi$ extends to $\phi-\beta$, which is an isomorphism,
since $N'(k+1)\subsetneq M'$.
\end{proof}

\subsection*{Proof of Theorem~\ref{uniqueness}}
Let $M_\infty$ and $M'_\infty$ be modules satisfying the conditions of Theorem~\ref{uniqueness}. Then the submodules $M_n$ and $M'_n$
in their respective filtrations are isomorphic with $M(n)$. 
By Lemma~\ref{extend} we can define
isomorphisms $\phi_n:M_n\to M'_n$ recursively for $n\in\N$, so that
$\phi_{n+1}$ extends $\phi_n$. We can therefore define
$\phi:M_\infty\to M'_\infty$ as follows. Each $m\in M_\infty$.
belongs to $M_n$ for some $n$. By the extension property, $\phi_n(m)$
does not depend on $n$, so we can define a map 
$\phi$ by $\phi(m)=\phi_n(m)$, which
is easily seen to be an isomorphism.\qed

\section{Dirichlet series in the $\SL(2,\Z)$-orbit of $\zeta(s)$}
We may identify $\DS$ with $\Ds$ and consider the 
action of $\SL(2,\Z)$ on analytic Dirichlet series via $\rho$.
We denote the Dirichlet series with one term $1^{-s}$ simply by $1$.
We have  $1.\rho(T)=\zeta(s)$.
We set $\varphi(s):=1.\rho(-S)$ and write
\begin{equation*}
\varphi(s):=\sum_{n=1}^\infty a_nn^{-s},
\end{equation*}
where $a_n=\rho(-S)_{1,n}$.
We denote the abscissae of conditional
and absolute convergence of $\varphi(s)$ by $\sigma_c$ and $\sigma_a$,
respectively.

Let $\C(\zeta(s), \varphi(s))$ 
be the subfield of the field of meromorphic functions
of the half-plane $\Real(s)>\max(1,\sigma_c)$
generated by the functions $\zeta(s)$ and $\varphi(s)$.  
It will be shown below that the Dirichlet series
in the orbit $1.\rho(\SL(2,\Z))$ all converge in this
half-plane and that the analytic functions they define 
belong to $\C(\zeta(s),\varphi(s))$.
Let $\Z G$ denote the integral group ring. The representation
$\rho$ extends uniquely to a ring homomorphism from
$\Z G$ to $\A$, which we will denote by $\rho$ also. The kernel
of this homomorphism contains the $2$-sided ideal $Q$ generated
by the elements $S+S^{-1}$ and $R+R^{-1}-1$. Since $R=ST$, we have 
the relation
\begin{equation*}
TS=1+ST^{-1}
\end{equation*}
in $\Z G/Q$. It follows that $\Z G/Q$ and hence $\rho(\Z G)$
is generated as an abelian group by the images of the elements $T^m$
and $ST^m$, $m\in\Z$.

\begin{theorem}\label{orbit} The Dirichlet series in the
common $\SL(2,\Z)$-orbit of $1$, $\zeta(s)$ and $\varphi(s)$
all converge for $\Real(s)>\max(1,\sigma_c)$, and belong to the additive
subgroup of $\C(\zeta(s),\varphi(s))$ generated by the elements
$\zeta(s)^m$ and $\varphi(s)\zeta(s)^m$, $m\in\Z$.
\end{theorem}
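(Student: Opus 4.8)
The plan is to leverage the structural description of $\rho(\Z G)$ already obtained in the paragraph preceding the theorem: working modulo the two-sided ideal $Q$ generated by $S+S^{-1}$ and $R+R^{-1}-1$, the relation $TS = 1 + ST^{-1}$ shows that $\Z G/Q$, and hence $\rho(\Z G)$, is spanned as an abelian group by the images of $T^m$ and $ST^m$ for $m\in\Z$. Applying the functional $1.(-)$ (evaluation of the Dirichlet series at the coefficient $1$) and using $1.\rho(T) = \zeta(s)$ and $1.\rho(-S) = \varphi(s)$, I would first record that $1.\rho(T^m) = \zeta(s)^m$ for all $m\in\Z$ (here negative powers make sense because $D$ is invertible in $\DR_0^\times$, and $\zeta(s)^{-1} = \sum \mu(n) n^{-s}$ converges for $\Real(s) > 1$). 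Next I would compute $1.\rho(ST^m)$: since $\rho$ is a ring homomorphism and $1.\rho(S) = -\varphi(s)$, one has $1.\rho(ST^m) = (1.\rho(S)).\rho(T^m)$, but this requires knowing that multiplication-by-$\zeta(s)^m$ is what $\rho(T^m)$ does on the \emph{right} action — which it is, because $\rho(T)=D$ is the multiplication operator for $\zeta(s)$ and multiplication operators commute. Hence $1.\rho(ST^m) = -\varphi(s)\zeta(s)^m$.

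With these two families computed, the core argument is: every element of $\SL(2,\Z)$ has image in $\rho(\Z G)$, which lies in the $\Z$-span of the $\rho(T^m)$ and $\rho(ST^m)$; applying $1.(-)$, every Dirichlet series $1.\rho(Y)$ is a $\Z$-linear combination of the series $\zeta(s)^m$ and $\varphi(s)\zeta(s)^m$, $m\in\Z$. The same reasoning applied to $\zeta(s).\rho(Y) = (1.\rho(T)).\rho(Y) = 1.\rho(TY)$ and to $\varphi(s).\rho(Y) = -1.\rho(SY)$ shows the orbits of $\zeta(s)$ and $\varphi(s)$ are contained in the orbit of $1$ (or differ from it by the same span), so all three orbits land in the additive subgroup of $\C(\zeta(s),\varphi(s))$ generated by $\zeta(s)^m$ and $\varphi(s)\zeta(s)^m$. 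The convergence claim for $\Real(s) > \max(1,\sigma_c)$ follows since each such generator is a finite product of $\zeta(s)^{\pm 1}$ (absolutely convergent for $\Real(s)>1$) and at most one factor of $\varphi(s)$ (convergent for $\Real(s)>\sigma_c$), and finite sums and products of Dirichlet series convergent in a common half-plane converge there.

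The one point requiring care — and what I expect to be the main obstacle — is the justification that applying the ring homomorphism $\rho$ to a $\Z$-linear relation in $\Z G/Q$ and then evaluating at the coefficient $1$ genuinely produces the corresponding $\Z$-linear identity of \emph{convergent} Dirichlet series, rather than merely of formal ones. Concretely, $1.\rho(\sum c_i g_i) = \sum c_i (1.\rho(g_i))$ holds coefficientwise for any finite sum, so as an identity in $\C^\N$ (equivalently in the algebra $\Dss$) it is automatic; the content is that each $1.\rho(g_i)$ individually lies in $\DS$, which is guaranteed by $\rho(\SL(2,\Z))\subseteq\DR_0$ from Theorem~\ref{main}(d) together with $1\in\DS$, and that a finite sum of elements of $\DS$ is in $\DS$. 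Then the identification of $\DS$ with $\Ds$ and the fact that all the series in question share the half-plane $\Real(s)>\max(1,\sigma_c)$ of convergence — because $\sigma_c$ is by definition the abscissa of conditional convergence of $\varphi(s)$ and every generator $\zeta(s)^m$ or $\varphi(s)\zeta(s)^m$ converges there — completes the argument. Thus the proof is essentially a bookkeeping exercise once the relation $TS=1+ST^{-1}$ in $\Z G/Q$ is in hand; the only subtlety is being scrupulous about the distinction between formal and analytic Dirichlet series, which is resolved by invoking Theorem~\ref{main}(d).
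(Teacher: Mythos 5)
Your proposal is correct and follows essentially the same route as the paper: reduce to the $\Z$-span of $\rho(T^m)$ and $\rho(ST^m)$ via the relation $TS = 1 + ST^{-1}$ in $\Z G/Q$, compute $1.\rho(T^m)=\zeta(s)^m$ and $1.\rho(ST^m)=-\varphi(s)\zeta(s)^m$, and invoke absolute convergence of $\zeta(s)^{\pm1}$ for $\Real(s)>1$ to handle convergence of the products. The paper's proof is terser — it simply records the convergence facts about $1/\zeta(s)$ and then states the two computations — but the substance is identical; your extra paragraph about the formal-versus-analytic distinction is a fair elaboration of what the paper treats implicitly via Theorem~\ref{main}(d).
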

\begin{proof} We first note that $\zeta(s)$ has no zeros in
the half-plane $\Real(s)>\max(1,\sigma_c)$ and that 
$\frac{1}{\zeta(s)}=\sum_{n=1}^\infty\mu(n)n^{-s}$, converges absolutely there.
Here, $\mu(n)$ is the M\"obius function.
In this half-plane we have $1.\rho(T^m)=1.D^m=\zeta(s)^m$ and
$1.\rho(ST^m)=1.\rho(S)\rho(T^m)=-\varphi(s)\zeta(s)^m$, for every $m\in\Z$.  
The theorem now follows from the discussion preceding it.
\end{proof}

\section{The cubic equation relating $\zeta(s)$ and $\varphi(s)$}
Let $\N_0=\N\cup\{0\}$ be the set of nonnegative integers.
\begin{lemma}\label{rhos}
We have 
\begin{equation*}
a_n= \rho(-S)_{1,n}=\alpha_1(n)+\sum_{\ell\geq 4}(-1)^\ell\alpha_{\ell-1}(n)\sum_{k=2}^{\lfloor\frac{\ell}{2}\rfloor}b_k\binom{\ell-k-2}{k-2}.
\end{equation*}
(See  Section~\ref{JCF}  and formula (\ref{bn}) for the definitions of
$\alpha_k(n)$  and $b_k$.) 

\end{lemma}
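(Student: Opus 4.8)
The plan is to trace $\rho(-S)$ back through the three layers of construction that produce $\rho$. Recall that $\rho(Y)=Z^{-1}\psi(\tau(Y))Z$ and $\tau(Y)=P\tau_1(Y)P^{-1}$, so that
$$
\rho(-S)=Z^{-1}\psi\bigl(P\,\tau_1(-S)\,P^{-1}\bigr)Z.
$$
The first row of $Z^{-1}$ is $e_1^{\mathrm T}$ since $Z^{-1}=XZX$ is upper unitriangular with $1$ in the $(1,1)$ position (Theorem~\ref{Zinverse}, Lemma~\ref{Zproperties}(a),(d)). Hence $\rho(-S)_{1,n}=\sum_j \psi(P\tau_1(-S)P^{-1})_{1,j}\,Z_{j,n}$. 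Since $\psi(A)_{1,j}$ is nonzero only when $j=2^{\ell-1}$ for some $\ell\in\N$, in which case it equals $(P\tau_1(-S)P^{-1})_{1,\ell}$, and since the $2^{\ell-1}$-th row of $Z$ is, by definition of $Z$, the first row of $(D-I)^{\ell-1}$, i.e. $\bigl(Z_{2^{\ell-1},n}\bigr)_n$ has $n$-th entry $\alpha_{\ell-1}(n)$ (with the convention $\alpha_0(n)=\delta_{1,n}$), we obtain
$$
a_n=\rho(-S)_{1,n}=\sum_{\ell\geq 1}\bigl(P\,\tau_1(-S)\,P^{-1}\bigr)_{1,\ell}\,\alpha_{\ell-1}(n).
$$
So everything reduces to computing the first row of $P\tau_1(-S)P^{-1}=-\tau(S)$.

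Next I would compute that first row using the explicit data already assembled. We have $\tau_1(S)=\tS$ with $\gamma(\tS)=\left(\begin{smallmatrix}0&-1\\1&0\end{smallmatrix}\right)$, a constant matrix, so $\tS$ is the block matrix $\diag(S,S,\ldots)$ in the $\U$-picture, meaning $\tS_{2i-1,2i}=-1$, $\tS_{2i,2i-1}=1$ and all other entries zero. Therefore $(P\tS P^{-1})_{1,\ell}=\sum_{i,j} p_{1,i}\,\tS_{i,j}\,q_{j,\ell}$. But the first row of $P$ is $e_1^{\mathrm T}$ as well (the $1$st row of $(\tJ-I)^0=I$), so $p_{1,i}=\delta_{1,i}$ and
$$
(P\tS P^{-1})_{1,\ell}=\sum_j \tS_{1,j}\,q_{j,\ell}=-\,q_{2,\ell},
$$
using $\tS_{1,2}=-1$ as the only nonzero entry of the first row of $\tS$. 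Hence $a_n=-\sum_{\ell\geq 1} q_{2,\ell}\,\alpha_{\ell-1}(n)$, and it remains only to substitute the explicit formula for $q_{2,\ell}$ from (\ref{qentries}). We have $q_{2,1}=0$, $q_{2,2}=1$, and for $\ell\geq 3$, writing $\ell=2s+2$ forces $s=0$ in the case "$q_{2s+2,m}$" is impossible for $m=\ell$ odd, so one uses instead the $q_{2s+1,m}$-type formula with the row index $2$ — more carefully, $q_{2,\ell}$ for $\ell\geq 3$ is read from the even-row formula $q_{2s+2,m}$ at $s=0$, $m=\ell$, giving $q_{2,\ell}=\sum_{k=1}^{\lfloor \ell/2\rfloor} b_k\binom{\ell-k-2}{k-2}$. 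Since $\binom{\ell-k-2}{k-2}=0$ unless $k\geq 2$, the $k=1$ term drops and we get $q_{2,\ell}=\sum_{k=2}^{\lfloor\ell/2\rfloor} b_k\binom{\ell-k-2}{k-2}$ for $\ell\geq 4$, while $q_{2,3}=0$. Re-indexing the sum $a_n=-\sum_{\ell}q_{2,\ell}\alpha_{\ell-1}(n)$ and separating the $\ell=2$ term (which contributes $-q_{2,2}\alpha_1(n)=-\alpha_1(n)$, so after the overall sign in $-S$ versus $S$ it becomes $+\alpha_1(n)$) from the tail $\ell\geq 4$ yields exactly the stated formula, with the factor $(-1)^\ell$ absorbing the sign bookkeeping between $\rho(-S)$ and the entries of $P\tS P^{-1}$.

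The main obstacle I anticipate is purely bookkeeping rather than conceptual: getting the signs and the index shifts exactly right across the three conjugations — in particular tracking the sign in $\rho(-S)=-\rho(S)$, the sign $\tS_{1,2}=-1$, and the alternating signs inside the formula for $q_{2,\ell}$ in (\ref{qentries}) — so that the net coefficient of $\alpha_{\ell-1}(n)$ comes out as $(-1)^\ell\sum_{k=2}^{\lfloor\ell/2\rfloor}b_k\binom{\ell-k-2}{k-2}$. One should double-check the boundary cases $\ell=2$ and $\ell=3$ by hand (and confirm $\alpha_0(n)=\delta_{1,n}$ contributes nothing since $q_{2,1}=0$), and verify that the formula for $q_{2,\ell}$ extracted from (\ref{qentries}) is valid in the regime $s=0$. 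Once the indices are aligned, no further computation is needed.
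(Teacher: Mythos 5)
Your approach matches the paper's exactly: peel off $Z^{-1}$ and $P$ using the fact that their first rows are both $e_1^{\mathrm T}$, reduce $\psi(\cdot)_{1,2^{\ell-1}}$ to an entry of $P\tau_1(\pm S)P^{-1}$ via the support of $\psi$, observe that the $2^{\ell-1}$-th row of $Z$ carries $\alpha_{\ell-1}(\cdot)$, and finish by reading $q_{2,\ell}$ off (\ref{qentries}).

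One sign slip to repair before the write-up: the two minus signs you introduce (from $\rho(-S)=-\rho(S)$ and from $\tS_{1,2}=-1$) cancel each other, giving $a_n=+\sum_{\ell\geq1}q_{2,\ell}\,\alpha_{\ell-1}(n)$; the $(-1)^\ell$ in the final formula does not come from that bookkeeping but is already present in (\ref{qentries}), which at $s=0$, $m=\ell$ reads $q_{2,\ell}=(-1)^\ell\sum_{k=1}^{\lfloor\ell/2\rfloor}b_k\binom{\ell-k-2}{k-2}$ (you dropped the leading $(-1)^\ell$ when you quoted it). With that corrected, $\ell=1$ contributes nothing ($q_{2,1}=0$), $\ell=2$ gives $\alpha_1(n)$, $\ell=3$ vanishes since the inner sum is empty, and the tail $\ell\geq4$ gives the claimed expression — which is precisely the paper's computation (Lemma~\ref{rhos}, steps (a)--(e)), just with $\tau_1(-S)$ written out directly so both extraneous signs never appear.
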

\begin{proof}
This is computed directly from the general formula for $\rho$:
\begin{equation*}
\rho(-S)=Z^{-1}\psi(P\tau_1(-S)P^{-1})Z.
\end{equation*}
We recall the following information.
\begin{enumerate}
\item[(a)] The matrix $\tau_1(-S)$ is the block-diagonal matrix with the $2\times 2$ block $-S=\left(\begin{smallmatrix}0&1\\-1&0\end{smallmatrix}\right)$ repeated along the main diagonal. (Lemma~\ref{relations})
\item[(b)] The first rows of $Z^{-1}$ and $P$ are equal to the 
first row of the identity matrix. ( Lemma~\ref{Zproperties} and Section~\ref{transP}.)
\item[(c)] For $A=(a_{i,j})_{i,j\in\N}$, we have $\psi(A)_{i,j}=0$ 
unless there exist $k$, $\ell\in\N$ and an odd number $d$
such that $(i,j)=(2^{k-1}d,2^{\ell-1}d)$, in which case 
$\psi(A)_{i,j}=a_{k,\ell}$.(Section~\ref{proofmain}.)
\item[(d)] From formula (\ref{qentries}) the entries in the
second row of $P^{-1}=(q_{k,\ell})$ are given by $q_{2,1}=0$, $q_{2,2}=1$
and, for $\ell\geq 3$,
\begin{equation}
q_{2,\ell}=(-1)^\ell\sum_{k=2}^{\lfloor\frac{\ell}{2}\rfloor}b_k\binom{\ell-k-2}{k-2}.
\end{equation}
\item[(e)] The entries of the matrix  $Z=(\alpha(i,j))_{i,j\in\N}$ satisfy
the equation $\alpha(2^r,j)=\alpha_r(j)$, for $r\in\N$. 
(Lemma~\ref{Zproperties}.)
\end{enumerate}
By (b), the first row of $\rho(-S)$ is obtained by multiplying the first row
of $\psi(P\tau_1(-S)P^{-1})$ with $Z$. By (c), the only nonzero
entries in the first row  of $\psi(P\tau_1(-S)P^{-1})$ are the entries
$\psi(P\tau_1(-S)P^{-1})_{1,2^{\ell-1}}=(P\tau_1(-S)P^{-1})_{1,\ell}$,
for $\ell\in\N$. Then by (b) and (a),
\begin{equation}
\begin{aligned}
(P\tau_1(-S)P^{-1})_{1,\ell}=(\tau_1(-S)P^{-1})_{1,\ell}=q_{2,\ell}.
\end{aligned}
\end{equation}
Hence, by (e) and (d),
\begin{equation}
\begin{aligned}
a_n&=\sum_{\ell\in\N}q_{2,\ell}\alpha(2^{\ell-1},n)\\
   &=\alpha_1(n)+\sum_{\ell\geq 3}q_{2,\ell}\alpha_{\ell-1}(n),
\end{aligned}
\end{equation}
and the lemma follows since $q_{2,3}=0$, by (d).
\end{proof}

Let $\Omega=\{p_1,\ldots, p_r\}$ be a finite set primes and let $t_1$,\dots, $t_r$
be indeterminates. We will be interested in  the formal power series

\begin{equation*}
F_\Omega=\sum_{(n_1,\dots,n_r)\in\N_0^r}a_{{p_1}^{n_1}{p_2}^{n_2}\cdots {p_r}^{n_r}}t_1^{n_1}\cdots t_r^{n_r}.
\end{equation*}
Let
\begin{equation*}
y=\frac{1}{(1-t_1)(1-t_2)\cdots(1-t_r)}-1
  =\sum_{(n_1,\ldots,n_r)\in\N_0^r}\alpha_1(p_1^{n_1}\cdots p_r^{n_r})t_1^{n_1}\cdots t_r^{n_r}.
\end{equation*}
Then for $\ell\geq1$
\begin{equation*}
y^\ell=\sum_{(n_1,\ldots,n_r)\in\N_0^r}
\alpha_\ell(p_1^{n_1}\cdots p_r^{n_r})t_1^{n_1}\cdots t_r^{n_r}.
\end{equation*}
Then we have
\begin{equation}\label{longeq}
\begin{aligned}
\frac{-y}{1+y}&=\sum_{\ell\in\N}(-1)^\ell y^\ell\\
                      &=\sum_{(n_1,\ldots,n_r)\in\N_0^r}[\sum_{\ell\in\N}(-1)^\ell\alpha_\ell(p_1^{n_1}\cdots p_r^{n_r})]t_1^{n_1}\cdots t_r^{n_r}.
\end{aligned}
\end{equation}

Set
\begin{equation*}
\begin{aligned}
f_\Omega&=\sum_{(n_1,\dots,n_r)\in\N_0}\sum_{\ell\in\N}(-1)^\ell
\alpha_{\ell-1}({p_1}^{n_1}{p_2}^{n_2}\cdots {p_r}^{n_r})\sum_{k=2}^{\lfloor\frac{\ell}{2}\rfloor}b_k\binom{\ell-k-2}{k-2}t_1^{n_1}\cdots t_r^{n_r}\\
&=\sum_{\ell\in\N}\sum_{k=2}^{\lfloor\frac{\ell}{2}\rfloor}b_k(-1)^\ell
\binom{\ell-k-2}{k-2}y^{\ell-1}\\
&=\sum_{k\geq2}[\sum_{\ell\geq2k}(-1)^{\ell}\binom{\ell-k-2}{k-2}y^{\ell-1}]b_k.
\end{aligned}
\end{equation*}
By Lemma~\ref{rhos},
\begin{equation*}
F_\Omega=y+f_\Omega.
\end{equation*}
 For $k\geq 2$ we set
\begin{equation*}
C_k=\sum_{\ell\geq 2k}(-1)^\ell\binom{\ell-k-2}{k-2}y^{\ell-1}
\end{equation*}
so that 
\begin{equation*}
f_\Omega=\sum_{k\geq 2}b_kC_k.
\end{equation*}

Next we consider, for $k\in\N\setminus\{1\}$, the generalized 
binomial coefficients
\begin{equation*}
p_k(x)=\frac{(x-k-2)(x-k-3)\cdots (x-2k+1)}{(k-2)!}
\end{equation*}
as polynomials in $x$ of degree $k-2$. 
Note that $p_k(\ell)=\binom{\ell-k-2}{k-2}$ for $\ell$ an integer $\geq 2k$ 
but, for example, when $\ell-k-2$ is a negative integer, 
the value $p_k(\ell)$ may be nonzero,
whereas our convention concerning binomial
coefficients would say that $\binom{\ell-k-2}{k-2}=0$. 
In order to find $C_2$ and $C_3$, we shall evaluate
\begin{equation*}
\widehat{C_k}=\sum_{\ell\in\N}(-1)^\ell p_k(\ell)y^{\ell}.
\end{equation*}
For $k=2$, we have $p_2(\ell)=1$, so $\widehat{C_2}=\frac{-y}{1+y}$
by (\ref{longeq}). Hence 
\begin{equation}\label{C2}
C_2=\frac{-1}{1+y}-\sum_{\ell=1}^3(-1)^\ell y^{\ell-1}=
\frac{-1}{1+y}+1-y+y^2=\frac{y^3}{1+y}.
\end{equation}
For $k=3$, we have $p_3(\ell)=\ell-5$, so
\begin{equation}
\begin{aligned}
\widehat{C_3}&=\sum_{\ell\in\N}(-1)^\ell\ell y^\ell-5\sum_{\ell\in\N}(-1)^\ell y^\ell\\
&=(\frac{-y}{1+y}+\frac{y^2}{(1+y)^2})+5\frac{y}{1+y}\\
&=\frac{4y}{1+y}+\frac{y^2}{(1+y)^2},
\end{aligned}
\end{equation}
where the second equality is obtained by applying the operator 
$y\frac{d}{dy}$ to the first and second members of (\ref{longeq}).
Therefore,
\begin{equation}
\begin{aligned}
C_3&=\frac{4}{1+y}+\frac{y}{(1+y)^2}-[(-1)p_3(1)+p_3(2)y-p_3(3)y^2+p_3(4)y^3]\\
   &=\frac{4}{1+y}+\frac{y}{(1+y)^2}-4+3y-2y^2+y^3\\
   &=\frac{y^5}{(1+y)^2}.
\end{aligned}
\end{equation}

Suppose $k\geq3$. We have 
\begin{equation*}
\begin{aligned}
C_k&=y^{2k-1}+\sum_{\ell\geq2k+1}(-1)^\ell\binom{\ell-k-2}{k-2}y^{\ell-1}\\
&=y^{2k-1}+\sum_{\ell\geq2k+1}(-1)^\ell\binom{\ell-1-k-2}{k-2}y^{\ell-1}
+\sum_{\ell\geq2k+1}(-1)^\ell\binom{\ell-1-k-2}{k-3}y^{\ell-1}.
\end{aligned}
\end{equation*}
Set
\begin{equation*}
A=\sum_{\ell\geq2k+1}(-1)^\ell\binom{\ell-1-k-2}{k-2}y^{\ell-1},\qquad
B=\sum_{\ell\geq2k+1}(-1)^\ell\binom{\ell-1-k-2}{k-3}y^{\ell-1}.
\end{equation*}
In $A$, set $\ell'=\ell-1$ and in $B$, set $k'=k-1$. Then
\begin{equation*}
A=-yC_k, \qquad B=y^2C_{k-1}-y^{2k-1}
\end{equation*}
Thus, 
\begin{equation*}
C_k=y^{2k-1}+A+B=y^{2k-1}-yC_k+y^2C_{k-1}-y^{2k-1}=-yC_k+y^2C_{k-1}.
\end{equation*}
Therefore,
\begin{equation*}
C_k=\frac{y^2}{1+y}C_{k-1}, \qquad\text{with $C_2=\frac{y^3}{1+y}$}
\end{equation*}
so
\begin{equation*}
C_k=\frac{y^{2k-1}}{(1+y)^{k-1}}.
\end{equation*}
Hence
\begin{equation*}
\frac{C_kC_{k'}}{C_{k+k'}}=\frac{y^{2(k+k')-2}}{(1+y)^{k+k'-2}}.\frac{(1+y)^{k+k'-1}}{y^{2(k+k')-1}}=\frac{1+y}{y}.
\end{equation*}

\begin{equation*}
f_\Omega^2=(\sum_{k\geq 2}b_kC_k)^2=\sum_{k,k'\geq 2}b_kb_{k'}C_{k+k'}\frac{(1+y)}{y}
\end{equation*}
Then, from the definition  (\ref{bn}) of the $b_k$,
\begin{equation*}
\begin{aligned}
\frac{y}{1+y}f_\Omega^2&=\sum_{K\geq 4}(\sum_{k=2}^{K-2}b_kb_{K-k})C_K\\
                &=\sum_{K\geq 4}(-b_K-2b_{K-1})C_K\\
                &=-\sum_{K\geq 2}b_KC_K+b_2C_2 +b_3C_3-2\frac{y^2}{1+y}\sum_{L\geq3}b_LC_L\\
&=-f_\Omega-C_2+2C_3-\frac{2y^2}{1+y}f_\Omega-\frac{2y^2}{1+y}C_2\\
&=-(1+\frac{2y^2}{1+y})f_\Omega-\frac{y^3}{1+y}+\frac{2y^5}{(1+y)^2}
-\frac{2y^2}{1+y}.\frac{y^3}{1+y}.
\end{aligned}
\end{equation*}
Therefore, we have
\begin{equation}
yf_\Omega^2+(1+y+2y^2)f_\Omega+y^3=0.
\end{equation}
Since $F_\Omega=f_\Omega+y$, this yields
\begin{equation}\label{Pielliptic}
yF_\Omega^2+(1+y)F_\Omega-y(1+y)=0.
\end{equation}
Set
\begin{equation}\label{Pwz}
P(z,w)=zw^2+(1+z)w-z(1+z)
\end{equation}
The discriminant $\Delta(z)$ is equal to $(1+z)^2+4z^2(1+z)$.
Set $c=\min\{\abs{e}\mid \text{$e\in\C$ and $\Delta(e)=0$}\}$.
Then there is a formal power series $u=\sum_{n=0}^\infty\gamma_nz^n$ such that $P(z,u)=0$
and $u$ defines an analytic function in $\{z\in\C\mid \abs{z}<c\}$.
Now the roots of $\Delta(z)$ are $-1$ and $e$,$\overline{e}=
\frac{-1\pm\sqrt{-15}}{8}$.
Since $\abs{e}=\frac{1}{2}$, 
it follows that $u$ converges for $\abs{z}<\frac{1}{2}$.
Applied to (\ref{Pielliptic}), we see that if $t_i$ take complex values with
$\abs{\prod_{i=1}^r\frac{1}{1-t_i}-1}<\frac{1}{2}$, the power series
$F_\Omega$ converges. In particular for $s\in\C$ with sufficiently
large real part, we have convergence when we set the $t_i=p_i^{-s}$.
If we denote by $\N_\Omega$ the set of natural numbers for which
every prime factor belongs to $\Omega$, and define
\begin{equation}
\varphi_\Omega(s)=\sum_{n\in\N_\Omega}a_nn^{-s},\quad\text{and}\quad
\zeta_\Omega(s)=\sum_{n\in\N_\Omega}n^{-s},
\end{equation}
we obtain the equation
\begin{equation}\label{Omegacubic}
(\zeta_\Omega(s)-1)\varphi_\Omega(s)^2+\zeta_\Omega(s)\varphi_\Omega(s)-\zeta_\Omega(s)(\zeta_\Omega(s)-1)=0.
\end{equation}
Initially, we know that this equation holds for $s$ with sufficiently
large real part.
The Dirichlet series $\zeta_\Omega(s)$ and $\varphi_\Omega(s)$ converge
absolutely in the half-plane $\Real(s)>\max(1,\sigma_a)$, where both 
$\zeta(s)$ and $\varphi(s)$ converge absolutely. It is then a general property of Dirichlet series that they converge uniformly on compact subsets of this half-plane, defining analytic functions there. Then, by the principle
of analytic continuation, the equation (\ref{Omegacubic}) holds in this half-plane. If we take $\Omega$ to be the set of the first $r$ primes and allow $r$ to
increase, the resulting sequences of analytic functions $\zeta_\Omega(s)$ and 
$\varphi_\Omega(s)$ defined in the above half-plane converge to $\zeta(s)$
and $\varphi(s)$, respectively. 

\begin{theorem}\label{cubicthm} In the half plane $\Real(s)>\max(1,\sigma_c)$, we have
\begin{equation}\label{cubic}
(\zeta(s)-1)\varphi(s)^2+\zeta(s)\varphi(s)-\zeta(s)(\zeta(s)-1)=0.
\end{equation}
\end{theorem}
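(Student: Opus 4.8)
The plan is to deduce (\ref{cubic}) from equation (\ref{Omegacubic}) by letting the finite prime set $\Omega$ exhaust all primes, and then extending the resulting identity by analytic continuation. Set
\begin{equation*}
\Phi(s)=(\zeta(s)-1)\varphi(s)^2+\zeta(s)\varphi(s)-\zeta(s)(\zeta(s)-1).
\end{equation*}
Since $\rho(-S)\in\DR_0$ by Theorem~\ref{main}, the first row $(a_n)$ of $\rho(-S)$ satisfies a polynomial growth bound (Lemma~\ref{ivanish}), so $(a_n)\in\DS$; hence $\sigma_a<\infty$ and $\varphi(s)$ is holomorphic on $\Real(s)>\sigma_c$. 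Together with the holomorphy of $\zeta(s)$ on $\Real(s)>1$, this shows $\Phi$ is holomorphic on the connected open set $D_c=\{s\in\C\mid\Real(s)>\max(1,\sigma_c)\}$.

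First I would prove that $\Phi$ vanishes identically on the smaller half-plane $D_a=\{s\in\C\mid\Real(s)>\max(1,\sigma_a)\}$. Taking $\Omega$ to be the set of the first $r$ primes, the discussion preceding the theorem establishes that (\ref{Omegacubic}) holds throughout $D_a$; that is, $\Phi_\Omega\equiv 0$ on $D_a$, where $\Phi_\Omega$ is obtained from $\Phi$ by replacing $\zeta$ and $\varphi$ by $\zeta_\Omega$ and $\varphi_\Omega$. It then suffices to check that $\zeta_\Omega\to\zeta$ and $\varphi_\Omega\to\varphi$ uniformly on compact subsets of $D_a$ as $r\to\infty$: given a compact $K\subseteq D_a$, choose $\sigma_0$ with $\Real(s)\geq\sigma_0>\max(1,\sigma_a)$ for all $s\in K$; then $\abs{\varphi(s)-\varphi_\Omega(s)}\leq\sum_{n\notin\N_\Omega}\abs{a_n}n^{-\sigma_0}$, and since the sets $\N_\Omega$ increase to $\N$ while $\sum_{n\in\N}\abs{a_n}n^{-\sigma_0}<\infty$, this tail tends to $0$; the same argument with $a_n$ replaced by $1$ handles $\zeta_\Omega$. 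Passing to the limit in $\Phi_\Omega\equiv 0$ yields $\Phi\equiv 0$ on $D_a$.

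To finish, observe that $D_a$ is a nonempty (because $\sigma_a<\infty$) open subset of $D_c$, since $\sigma_a\geq\sigma_c$ forces $D_a\subseteq D_c$. As $\Phi$ is holomorphic on the connected set $D_c$ and vanishes on the open subset $D_a$, the identity theorem forces $\Phi\equiv 0$ on all of $D_c$, which is precisely the assertion (\ref{cubic}). There is no deep obstacle left: the substantive work, namely the closed form for the coefficients $a_n$ (Lemma~\ref{rhos}), the generating-function manipulations leading to (\ref{Pielliptic}), and the continuation of (\ref{Omegacubic}) into $D_a$, is already in place. The only point demanding any care is the uniform convergence on compacta, which rests on the elementary fact that $\N_\Omega$ exhausts $\N$ together with the absolute convergence available on $D_a$; and it is the finiteness of $\sigma_a$, hence the non-emptiness of $D_a$, that makes the concluding analytic-continuation step legitimate.
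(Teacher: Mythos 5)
Your proof is correct and follows the same route as the paper: establish the cubic identity on the half-plane of absolute convergence $\Real(s)>\max(1,\sigma_a)$ by letting the finite prime set $\Omega$ exhaust all primes, then extend to $\Real(s)>\max(1,\sigma_c)$ by analytic continuation. You spell out more carefully the uniform convergence on compacta and the finiteness of $\sigma_a$ (via $\rho(-S)\in\DR_0$) that makes the continuation step legitimate, points the paper leaves implicit, but the argument is otherwise the same.
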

\begin{proof}
The validity of this algebraic relation for $\Real(s)>\max(1,\sigma_a)$
is immediate from the foregoing discussion. Since $\zeta(s)$ and $\varphi(s)$
represent analytic functions throughout the half-plane 
$\Real(s)>\max(1,\sigma_c)$ the relation is valid on this
larger region, by the principle of analytic continuation.
\end{proof}

\begin{remark}
Since $\phi(s)$ defines an analytic function  in $\Real(s)>\sigma_c$, it
follows that $\zeta(s)-1$ cannot be  equal to any root of $\Delta(z)$ for $s$
in this half-plane. By Theorem 11.6 (C) of \cite{Ti}, $\zeta(s)$
takes on every nonzero value in $\Real(s)>1$. Therefore, $\sigma_c>1$.
A sharper bound follows from  \cite{BoJ}, which proves
the existence of a constant $C\approx 1.764$
such that  the closure $M(\sigma)$ of the set
of values of $-\log\zeta(\sigma+it)$ , $t\in\R$, is
bounded  by a convex curve when $\sigma<C$,  
and a ring-shaped domain between two convex curves when $\sigma>C$.
From this it follows by computation that $\zeta(s)=\frac{7\pm\sqrt{-15}}{8}$ for some $s$ with $\Real(s)$ arbitrarily close to 1.8, so $\sigma_c\geq 1.8$. 
We also know from the results of \cite{Ti}, p.300, that $\zeta(s)$
never takes the value $\frac{-7\pm\sqrt{-15}}{8}$ when $\Real(s)>1.92$.
\end{remark}

\begin{remark} A slight modification of the discussion above shows
that (\ref{Omegacubic}) holds for an arbitrary set $\Omega$ of primes,
again for $\Real(s)>\sigma_a$.
\end{remark}

The function $\zeta(s)$ can be extended to a meromorphic
function in the whole complex plane, whose only singularity is a simple pole at 
$s=1$. Then equation (\ref{cubic}) defines analytic continuations 
of $\varphi(s)$ along arcs in the plane which do not
pass through $s=1$ or the branch points $\{s\mid \text{$\zeta(s)=0$ or 
$\frac{7\pm\sqrt{-15}}{8}$}\}$, with the exception that one of the two
branches at each point $s$ with $\zeta(s)=1$ has a simple pole there.
By \cite{BoJ}, we know that there is a constant
$C\approx 1.764$ such that $\zeta(s)\neq 1$ for all $s$ with $\Real(s)>C$.
\subsection{Some generalizations}
In the discussion following (\ref{Pwz}), we could equally well have
substituted $t_i=M(p_i)p_i^{-s}$,
where $M$ is any bounded, completely multiplicative
complex function of the natural numbers, such as a Dirichlet character.
In that case, if we set 
\begin{equation}
\zeta_M(s)=\sum_{n=1}^\infty M(n)n^{-s},\qquad \varphi_M(s)=\sum_{n=1}^\infty a_n M(n)n^{-s},
\end{equation}
the same reasoning shows that $\zeta_M(s)$ and $\varphi_M(s)$
are related by (\ref{cubic}), just as $\zeta(s)$ and $\varphi(s)$ are,
in a suitable half-plane.

We can also  extend our discussion to number fields. For this purpose,
a necessary remark is that, by Lemma~\ref{rhos}, the coefficient
$a_n$ in $\varphi(s)$ depends only on the partition 
$\lambda: e_1\geq e_2\geq\cdots e_r\geq 1$ defined by the exponents 
$e_i$ which occur in the prime factorization of $n$, in that if
$n$ and $n'$ define the same partition then $a_n=a_{n'}$. We write
$a_\lambda$ for this common value.

Let $K$ be a number field.
The factorization of an ideal $\mathfrak g$ of its ring of integers
$\mathfrak o$ into prime ideals determines a partition $\lambda$,
so we may we set $a_{\mathfrak g}=a_{\lambda}$.
With these notations, our previous discussion up to (\ref{Pwz}) 
remains valid if the set $\Omega$ is taken to be a finite set $\{\mathfrak P_1,\mathfrak P_2,\ldots,\mathfrak P_r\}$ of prime ideals in
$\mathfrak o$, instead of rational primes.
Then, in the paragraph following (\ref{Pwz}), if
we  substitute $t_i= N(\mathfrak P_i)^{-s}$, we deduce, as before, that the 
Dedekind zeta function of $K$,
\begin{equation}
\zeta_K(s)=\sum_{\mathfrak g} N(\mathfrak g)^{-s}
\end{equation}
is related to the Dirichlet series
\begin{equation} 
\varphi_{K}(s)=\sum_{\mathfrak g}
a_{\mathfrak g}N(\mathfrak g)^{-s}
\end{equation}
 by the cubic relation (\ref{cubic}), in the appropriate
half-plane.

\section{A functional equation for $\varphi(s)$}
The classical functional equation for $\zeta(s)$ can be written as
\begin{equation}
\zeta(1-s)=a(s)\zeta(s),
\end{equation}
where $a(s)=\frac{\Gamma(s/2)\pi^{-s/2}}
{\Gamma((1-s)/2)\pi^{-(1-s)/2}}$.

If we apply this to (\ref{cubic}) with $s$ replaced
by $(1-s)$ and then eliminate $\zeta(s)$ from the resulting
equation, using (\ref{cubic}), a functional equation
relating $\varphi(s)$ and $\varphi(1-s)$ is obtained.
Let
\begin{multline}
G(a,x,y)=a^4x^4-a^3x^2(x^2+x+1)(y^2+y+1)\\
+a^2[x^2(y^2+y+1)^2+y^2(x^2+x+1)^2-2x^2y^2]\\
-ay^2(x^2+x+1)(y^2+y+1)+y^4.
\end{multline}
Then $G(a,x,y)$ is irreducible in $\C[a,x,y]$ and 
 $G(a(s),\varphi(s),\varphi(1-s))=0$.


\subsection*{Acknowledgements}
We thank Peter Sarnak for some helpful discussions and for
bringing \cite{L} to our attention.

\end{document}